\date{\today}
\newtheorem*{theorem*}{Theorem}
\newtheorem{theorem}{Theorem}[section]
\newtheorem{corollary}[theorem]{\bf{Corollary}}
\theoremstyle{definition}
\theoremstyle{remark}
\newtheorem{remark}[theorem]{\bf{Remark}}
\numberwithin{equation}{section}
\newcommand{\beas}{\begin{eqnarray*}}
\newcommand{\eeas}{\end{eqnarray*}}
\newcommand{\bes} {\begin{equation*}}
\newcommand{\ees} {\end{equation*}}
\newcommand{\be} {\begin{equation}}
\newcommand{\ee} {\end{equation}}
\newcommand{\bea} {\begin{eqnarray}}
\newcommand{\eea} {\end{eqnarray}}
\newcommand{\R}{\mathbb R}
\newcommand{\C}{\mathbb C}
\newcommand{\Z}{\mathbb Z}%
\newcommand{\N}{\mathbb N}
\newcommand{\X}{\mathbb{X}}
\newcommand{\what}{\widehat}
\renewcommand{\Im}{\text{Im}}
\renewcommand{\Re}{\text{Re}}
\renewcommand{\Re}{\operatorname{Re}}
\renewcommand{\Im}{\operatorname{Im}}
\title[Classical inequalities and Fourier multipliers on $\mathrm{SL}(2,\R)$]{Classical inequalities for all Fourier matrix coefficients of $\mathrm{SL}(2,\mathbb{R})$ and their applications}
\author[Kumar, Rana, Ruzhansky]{Vishvesh Kumar, Tapendu Rana, Michael Ruzhansky}	
\address{Vishvesh Kumar  \endgraf Department of Mathematics: Analysis, Logic and Discrete Mathematics,	\endgraf Ghent University, 	\endgraf Krijgslaan 281, Building S8, B 9000 Ghent, Belgium.} \email{vishveshmishra@gmail.com, Vishvesh.Kumar@ugent.be}
\address{Tapendu Rana  \endgraf Department of Mathematics: Analysis, Logic and Discrete Mathematics,	\endgraf Ghent University, 	\endgraf Krijgslaan 281, Building S8, B 9000 Ghent, Belgium.} \email{tapendurana@gmail.com, tapendu.rana@ugent.be}
\address{Michael Ruzhansky \endgraf Department of Mathematics: Analysis, Logic and Discrete Mathematics,	\endgraf Ghent University, 	\endgraf Krijgslaan 281, Building S8, B 9000 Ghent, Belgium. 
\endgraf and
\endgraf School of Mathematical Sciences
\endgraf Queen Mary University of London
\endgraf United Kingdom}
\email{michael.ruzhansky@ugent.be} 
\date{}
\keywords{Fourier multipliers on groups;  Hausdorff-Young inequality; Paley inequality;  spherical functions;   $K$-type functions on $\mathrm{SL}(2,\R)$; heat equation}
\subjclass[2020]{Primary: 43A22, 22E30;   Secondary: 43A80, 35K05}
\begin{document}
\maketitle
\begin{abstract}
In this article, we establish three fundamental Fourier inequalities: the Hausdorff-Young inequality, the Paley inequality, and the Hausdorff-Young-Paley inequality for $(l, n)$-type functions on $\mathrm{SL}(2,\R)$. Utilizing these inequalities, we demonstrate the $L^p$-$L^q$ boundedness of $(l, n)$-type Fourier multipliers on $\mathrm{SL}(2,\R)$. Furthermore, we explore applications related to the $L^p$-$L^q$ estimates of the heat kernel of the Casimir element on $\mathrm{SL}(2,\R)$ and address the global well-posedness of certain parabolic and hyperbolic nonlinear equations.
\end{abstract}
\tableofcontents
\addtocontents{toc}{\setcounter{tocdepth}{2}}
\section{Introduction}

In the realm of harmonic analysis, it is essential to understand how integral operators behave across different function spaces. Multiplier operators, in particular, are powerful tools in this context, finding applications from Fourier analysis to partial differential equations. In their seminal papers, Mikhlin \cite{Mih56}  and  H\"ormander \cite{Hormander_Acta_60} initiated the study of the boundedness of translation-invariant operators in the Euclidean setting. These translation-invariant operators can be characterized using the classical Euclidean Fourier transform on $\mathbb{R}^d$, hence they are also known as Fourier multipliers. The Fourier multiplier operator associated with a symbol $m(\xi)$ is defined as
\begin{equation}\label{defn_T_M_Euclid}
T_m f(x)=\int_{\R^d} e^{2\pi i x \cdot \xi} m(\xi)\mathcal{F} f(\xi)\,d\xi,
\end{equation}
where $\mathcal{F}f $ denotes the Euclidean Fourier transform given by
\begin{equation*}
 \mathcal{F} f(\xi)= \int_{\R^d} f(x)  e^{-2\pi i x \cdot \xi} dx.
\end{equation*}
Mikhlin \cite{Mih56} and H\"{o}rmander \cite{Hormander_Acta_60} established the $L^p$-boundedness and $L^p$-$L^q$ boundedness of Fourier multipliers on $\mathbb{R}^d$. Consequently, the study of multiplier operators on $\mathbb{R}^d$, as well as in various other spaces, flourished, becoming an important and active research area in modern harmonic analysis and partial differential equations. In particular, the $L^p$-boundedness of Fourier multipliers has been investigated by several researchers in different settings;  {we cite  \cite{Hormander_Acta_60,Taylor,Meda_Stefano_2019,Meda_Wrobel_2021,MR3369343,BT} among others to mention a few.}

Hörmander observed the striking aspect of studying $L^p$-$L^q$ estimates of multiplier operators for the range $1 < p \leq 2 \leq q < \infty$ is that they do not rely on the regularity of the symbol, unlike the results concerning $L^p$-$L^p$ multipliers, where the symbol's smoothness is crucial. Furthermore, he established the following result concerning the $L^{p}$-$L^q$ boundedness problem of multiplier operator  $T_m$.
\begin{theorem}[{{\cite[Theorem 1.11]{Hormander_Acta_60}}}] \label{thm_hor_Lp,Lq}
    Let $ 1<p\leq 2\leq q<\infty$. Assume that there exists a constant $C>0$ such that 
    \begin{equation}
        \left|\left\{ \xi \in \R^d : |m(\xi)|>\alpha \right\} \right|^{\left(\frac{1}{p}-\frac{1}{q} \right)} \leq {C}/{\alpha},
    \end{equation}
    for all $\alpha >0$. Then the associated Fourier multiplier operator $T_m$ is bounded from $L^p(\R^d)$ to $L^q(\R^d)$.    
\end{theorem}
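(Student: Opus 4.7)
The plan is to dualize the claim and reduce it to two applications of the Hausdorff--Young--Paley (HYP) inequality. Define $r$ by $1/r = 1/p - 1/q$; the hypothesis says exactly that $m \in L^{r,\infty}(\R^d)$ with quasinorm at most $C$. By duality and Parseval's identity applied to the definition \eqref{defn_T_M_Euclid},
\[
\|T_m f\|_{L^q(\R^d)} = \sup_{\|g\|_{L^{q'}}=1} |\langle T_m f, g\rangle|,\qquad \langle T_m f, g\rangle = \int_{\R^d} m(\xi)\,\mathcal{F}f(\xi)\,\overline{\mathcal{F}g(\xi)}\,d\xi,
\]
so it suffices to bound this bilinear expression by $\|f\|_{L^p}\|g\|_{L^{q'}}$ up to a constant. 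Splitting the weight symmetrically as $|m| = |m|^\alpha |m|^{1-\alpha}$ for a parameter $\alpha \in [0,1]$ to be chosen, and applying the Cauchy--Schwarz inequality, one obtains
\[
|\langle T_m f, g\rangle| \leq \bigl\| |m|^\alpha \,\mathcal{F}f \bigr\|_{L^2(\R^d)} \cdot \bigl\| |m|^{1-\alpha}\,\mathcal{F}g \bigr\|_{L^2(\R^d)}.
\]

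Each factor is a weighted $L^2$ norm of a Fourier transform, which is precisely the setting of the HYP inequality at $b = 2$: for $1 < a \leq 2$ and a positive weight $\phi$,
\[
\|\phi\,\mathcal{F}h\|_{L^2(\R^d)} \leq C\,\|\phi\|_{L^{s,\infty}(\R^d)}\,\|h\|_{L^a(\R^d)}, \qquad \frac{1}{s} = \frac{1}{a} - \frac{1}{2}.
\]
Applying this to the first factor with $\phi = |m|^\alpha$ and $a = p$, and using the weak-Lebesgue scaling $\||m|^\alpha\|_{L^{s,\infty}} = \|m\|_{L^{\alpha s,\infty}}^{\alpha}$, gives $\||m|^\alpha \mathcal{F}f\|_{L^2} \leq C\,\|m\|_{L^{\alpha s_1,\infty}}^{\alpha}\,\|f\|_{L^p}$ with $1/s_1 = 1/p - 1/2$; analogously, $\||m|^{1-\alpha}\mathcal{F}g\|_{L^2} \leq C\,\|m\|_{L^{(1-\alpha) s_2,\infty}}^{1-\alpha}\,\|g\|_{L^{q'}}$ with $1/s_2 = 1/q' - 1/2$. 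For both weak-Lebesgue exponents to match the given exponent $r$ in the hypothesis, one requires $\alpha s_1 = (1-\alpha) s_2 = r$, which forces $\alpha = r(1/p - 1/2)$. The compatibility $\alpha + (1-\alpha) = 1$ is automatic from
\[
r\!\left(\tfrac{1}{p} - \tfrac{1}{2}\right) + r\!\left(\tfrac{1}{q'} - \tfrac{1}{2}\right) = r\!\left(\tfrac{1}{p} + \tfrac{1}{q'} - 1\right) = r\!\left(\tfrac{1}{p} - \tfrac{1}{q}\right) = 1,
\]
and the hypothesis $1 < p \leq 2 \leq q < \infty$ ensures $\alpha, 1 - \alpha \in [0,1]$. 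Multiplying the two HYP bounds, the exponents combine to $\|m\|_{L^{r,\infty}}^{\alpha + (1-\alpha)} = \|m\|_{L^{r,\infty}}$, and taking the supremum over $\|g\|_{L^{q'}} = 1$ yields the desired estimate.

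The main obstacle is the supporting HYP inequality at $b = 2$, which is the only genuinely analytic input. This reduces essentially to Paley's inequality and is obtained by interpolating (via Marcinkiewicz or Hölder in Lorentz spaces) between Plancherel's theorem and the weighted level-set estimate provided by the weak-type hypothesis on the weight. Once this ingredient is available, the rest of the argument is clean exponent matching via Cauchy--Schwarz and duality, with no smoothness on $m$ required — illustrating Hörmander's observation that $L^p$--$L^q$ multiplier theorems in the range $1 < p \leq 2 \leq q < \infty$ do not depend on regularity.
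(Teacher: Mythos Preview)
Your argument is correct. Note first that the paper does not actually prove this Euclidean statement---it is quoted from H\"ormander as motivation---but the paper does prove the $\mathrm{SL}(2,\R)$ analogue (Theorem~\ref{lp-lq-mutli}), and its method specializes to the Euclidean case. That method differs from yours: the paper reduces by duality to $p\le q'$, applies the \emph{inverse} Hausdorff--Young inequality \eqref{inv_Hausd} to pass $\|T_m f\|_{L^q}$ to $\|\mathcal F(T_m f)\|_{L^{q'}} = \|m\,\mathcal Ff\|_{L^{q'}}$, and then invokes the full Hausdorff--Young--Paley inequality (Theorem~\ref{thm_H_Y_P}) once at $b=q'$ with weight $\psi=|m|^r$. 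Your route is instead bilinear and symmetric: you pair against $g$, split $|m|=|m|^\alpha|m|^{1-\alpha}$, apply Cauchy--Schwarz, and use HYP twice at $b=2$ (which is essentially just Paley's inequality) with the exponent $\alpha=r(1/p-1/2)$ chosen so both weak-Lebesgue norms land on $L^{r,\infty}$. The paper's approach is shorter (one HYP application) but needs the inverse Hausdorff--Young step and the HYP inequality at a general $b$; your approach needs only the $b=2$ endpoint of HYP and avoids the inverse Hausdorff--Young, at the cost of the exponent bookkeeping. Both are legitimate proofs of the same classical fact.
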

Recently, researchers have turned their attention to establishing the boundedness of $L^p$-$L^q$ multipliers in several different settings; we refer to \cite{Stanton_and_Tomas,CGM93,RR19,AMM18, RRN19,KR22, KR23, KR23II, CK23,CK22i,ruzhansky2024h,cardona2023p,MR4575780} and references therein.  In this article, we aim to explore the $L^p$-$L^q$ boundedness properties of multipliers on all Fourier matrix coefficients on $\mathrm{SL}(2,\mathbb{R})$, and their application to nonlinear equations.

The study of boundedness properties of multiplier operators has primarily focused on the doubling setting, relying on suitable covering lemmas. However, when dealing with spaces characterized by exponential volume growth, such as noncompact connected semisimple Lie groups or noncompact type symmetric spaces, the absence of analogs for the Calderon-Zygmund decomposition or useful covering lemmas becomes apparent. Despite these challenges, Clerc and Stein addressed the multiplier problem in their seminal work \cite{Clerc_Stein} on general noncompact type symmetric spaces.

Subsequently, the endeavor to extend the classical Hörmander-Mikhlin multiplier theorem to symmetric spaces of noncompact type has attracted the attention of several authors \cite{Stanton_and_Tomas,Anker-Lohoue,Taylor,Anker,GMM,Meda_Vallarino_10,Meda_Wrobel_2021}. In 1990, Anker, in his remarkable work \cite{Anker}, improved and generalized the previous results of Clerc and Stein \cite{Clerc_Stein}, Stanton and Tomas \cite{Stanton_and_Tomas}, Anker and Lohoue \cite{Anker-Lohoue}, and Taylor \cite{Taylor} by proving an analogue of the Hörmander-Mikhlin multiplier theorem on noncompact type symmetric spaces $\X$ of arbitrary rank. Later, Ionescu \cite{Ionescu_2002,Ionescu_2003} made further improvements to the theorem by replacing the continuity requirement of the multiplier $m$ on the boundary with a condition related to a possible singularity on the boundary. These results of Ionescu are currently the best-known sufficient conditions of the Hörmander-Mikhlin type on $\X$. Recently, Wróbel \cite{wrobel} presented a multiplier theorem for rank one symmetric spaces, which improves upon the results of both \cite{Stanton_and_Tomas} and \cite{Ionescu_2002}.

Motivated by the works of Anker \cite{Anker} and Ionescu \cite{Ionescu_2002,Ionescu_2003}, Ricci and Wr\'{o}bel \cite{Ricci} studied the $L^p$-boundedness ($1<p<\infty$) of multiplier operators on the radial sections of line bundles over the Poincaré upper half plane $\mathrm{SL}(2,\R)/\mathrm{SO(2)}$. More precisely, they focused on the group $G=\mathrm{SL}(2,\R)$, and instead of considering $K:=\mathrm{SO(2) }$-biinvariant functions, they explored the problem of the $L^p$-boundedness for the multiplier operator defined on functions on $G$ satisfying the property:
\begin{align}\label{n_ntype_int}
f(k_\theta g k_\vartheta) = e^{in(\theta+\vartheta)} f(g),
\end{align}
 for all $g \in G$, $k_\theta, k_{\vartheta}\in K$, where $n\in \Z$ is fixed and \begin{align*}
  k_\theta := \begin{pmatrix}
				\cos \theta  & \sin \theta   \\
				-\sin \theta & \cos \theta  
			\end{pmatrix}.
   \end{align*} 
   The functions satisfying \eqref{n_ntype_int} are called $(n,n)$ type functions on $G$. In the special case where $n=0$, their result \cite[Theorem 5.3]{Ricci} aligns with the multiplier theorems on symmetric spaces obtained by Anker \cite{Anker} and Stanton and Tomas \cite{Stanton_and_Tomas} in the $K$-biinvariant case. Moreover, the authors \cite{Ricci} extended the result of Clerc and Stein \cite[Theorem 1]{Clerc_Stein} in this setting by providing a necessary condition on the multiplier $m$ for the associated multiplier operator to be $L^p$-bounded for $p\in (1,\infty)\setminus \{2\}$. Subsequently, the second and third authors extended the results on multiplier operators from \cite{Ricci} to pseudo-differential operators in \cite{rana_23}.

    Inspired by the results above on $L^p$-boundedness of multipliers for $(n,n)$ type functions on $G$, our interest naturally extends to investigating the $L^p$-$L^q$ estimate of the multiplier operator for more general types of functions on $G$. Specifically, we have established the following boundedness result for the multiplier operator defined on functions on $G$ that satisfy the property
\begin{align}\label{l_ntype_int}
f(k_\theta g k_\vartheta) = e^{il\theta}f(g) e^{in\vartheta},
\end{align}
for all $g \in G$, $k_\theta, k_{\vartheta}\in K$, where $l$ and $n$ are integers of the same parity. Please see Section \ref{Preliminaries} for any unexplained notation.
\begin{theorem} \label{lp-lq-mutli_int}
Let $1 < p \leq 2 \leq q < \infty$ and $l,n \in \Z^{\tau}$. Suppose $T_m$ is the multiplier operator defined in \eqref{mult_(l,n)} with the symbol $m$. Then there exists a constant $C=C({p,q}) > 0$, such that the following holds
    \begin{align*}
        \|T_m \|_{L^p(G)_{l,n} \rightarrow L^q(G)_{l,n}} \leq C \sup_{\alpha>0} \alpha \left( \int\limits_{\{  \lambda \in \R : |m(\lambda)|> \alpha\}}  \mu(\tau,\lambda) d\lambda \right)^{\frac{1}{p}-\frac{1}{q}} + C\left(\sum_{k \in i\Gamma_{l,n}}   |m(k)| |k|^{\frac{1}{p}-\frac{1}{q}}\right),
    \end{align*}
     where  $ \|T_m \|_{L^p(G)_{l,n} \rightarrow L^q(G)_{l,n}} $ denotes the operator norm of $ T_m$ from $L^p(G)_{l,n}$ to $L^q (G)_{l,n}$ and $\mu(\tau,\lambda)$ represents the density function \eqref{defn_mu_t} of the Plancherel measure associated with the principal series part.   
\end{theorem}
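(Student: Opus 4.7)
The strategy is to deduce the theorem from the three Fourier inequalities for $(l,n)$-type functions announced in the abstract --- the Hausdorff-Young inequality, the Paley inequality, and their combined Hausdorff-Young-Paley inequality --- which are established in the preceding sections of the paper. The decisive structural fact I would exploit is that the Plancherel decomposition of $L^2(G)_{l,n}$ into principal and discrete series parts makes the Fourier transform of an $(l,n)$-type function essentially scalar-valued on each piece of the spectrum, so that the multiplier $T_m$ acts on the Fourier side by pointwise multiplication by $m$.

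First I would split $T_m f = T_m^{\mathrm{ps}} f + T_m^{\mathrm{ds}} f$ according to the principal (continuous) and discrete parts of the spectrum: $T_m^{\mathrm{ps}}$ corresponds to the parameter $\lambda \in \mathbb{R}$ integrated against the density $\mu(\tau,\lambda)\,d\lambda$, while $T_m^{\mathrm{ds}}$ corresponds to the summation over $k \in i\Gamma_{l,n}$. Since these two pieces have disjoint Fourier support, it suffices to bound each separately from $L^p(G)_{l,n}$ to $L^q(G)_{l,n}$ and add the resulting estimates; cross terms automatically vanish by Plancherel.

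For the principal series part I would apply the Hausdorff-Young-Paley inequality on the measure space $(\mathbb{R}, \mu(\tau,\lambda)\,d\lambda)$ with weight $|m(\lambda)|$. Concretely, one interpolates --- via the Marcinkiewicz/Stein-Weiss scheme --- between the Paley endpoint at $(p,p)$ and the Hausdorff-Young endpoint at $(p,p')$; the resulting Lorentz-type control on $m$ yields exactly the first term on the right-hand side, namely $\sup_{\alpha>0}\alpha \bigl(\int_{\{|m(\lambda)|>\alpha\}}\mu(\tau,\lambda)\,d\lambda\bigr)^{\frac{1}{p}-\frac{1}{q}}$. The discrete series part is handled by the same interpolation argument, but now on the discrete spectrum $i\Gamma_{l,n}$ equipped with the counting measure weighted by the formal degrees, which grow like $|k|$; running the HYP inequality here produces the second term $\sum_{k\in i\Gamma_{l,n}} |m(k)||k|^{\frac{1}{p}-\frac{1}{q}}$.

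The main technical obstacle I anticipate is bookkeeping: one needs to verify that the Plancherel weights $\mu(\tau,\lambda)$ on the principal side and the formal degrees $|k|$ on the discrete side appear in exactly the right powers after interpolation, so that both pieces of the final bound carry the matching exponent $\frac{1}{p}-\frac{1}{q}$. Once the explicit Plancherel formula for $(l,n)$-type functions is plugged in correctly, assembling the two estimates gives the desired inequality, with the constant $C=C(p,q)$ absorbing the interpolation constants from both parts.
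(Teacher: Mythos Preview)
Your overall strategy---use the Hausdorff--Young, Paley, and Hausdorff--Young--Paley inequalities for $(l,n)$-type functions---is the paper's strategy, but your outline is missing the step that actually converts a Fourier-side estimate into an $L^q(G)$ bound on $T_m f$. The HYP inequality controls $\|\mathcal{F}_{l,n} f\|$ in a weighted $L^b$ space in terms of $\|f\|_{L^p(G)}$; it does not by itself say anything about $\|T_m f\|_{L^q(G)}$. The paper closes this gap by first invoking the \emph{dual} Hausdorff--Young inequality \eqref{inv_Hausd}: since $q\geq 2$,
\[
\|T_m f\|_{L^q(G)_{l,n}} \lesssim \|\mathcal{F}_{l,n}(T_m f)\|_{L^{q'}(d\tilde{\nu}_{l,n})} = \|m\cdot \mathcal{F}_{l,n} f\|_{L^{q'}(d\tilde{\nu}_{l,n})},
\]
and only then applies HYP with $b=q'$ and weight $\psi=|m|^r$, $1/r=1/p-1/q$ (after a duality reduction to $p\le q'$ so that $q'\in[p,p']$). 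Without this dual HY step your argument has no mechanism to return from the spectral side to $L^q(G)$, so as written the plan does not close.

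Two smaller points. First, the paper does \emph{not} split $T_m$ into principal and discrete pieces; it works on the combined measure space $(\mathbb{R}\cup i\Gamma_{l,n},\,d\tilde\nu_{l,n})$ throughout, and the two terms in the conclusion fall out simultaneously from the single HYP bound (the discrete sum with exponent $\tfrac1p-\tfrac1q$ appears after pulling the $1/r$-th power inside the finite sum). Your splitting is harmless via the triangle inequality, but your stated justification---``cross terms automatically vanish by Plancherel''---is only valid at $q=2$; for general $q$ there is no orthogonality in $L^q(G)$. Second, running HYP separately on the discrete block would require you to first state Paley and HY inequalities restricted to $i\Gamma_{l,n}$, which the paper never isolates; treating both blocks at once avoids this extra bookkeeping.
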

A classical result in harmonic analysis is the Hausdorff-Young inequality, which dictates that the Fourier transform maps functions from $L^p$ spaces to $L^{p'}$ spaces with respect to the Plancherel measure for $1\leq p\leq 2$. Paley raised a fundamental question: Given $p\in (1,2]$, can we find a measure $\nu$ such that the Fourier transform operator is bounded from $L^p$ to $L^p(\nu)$? To be more precise, the author established the following result within the context of Euclidean spaces, which also implies a weighted version of Plancherel’s formula for $L^p(\R^d)$ with $1<p\leq 2$.
\begin{theorem}[{{\cite[Theorem 1.10]{Hormander_Acta_60}}}]
Let $\phi>0 $ be a measurable function on $\R^d$ such that 
\begin{align*}
     \left|\left\{ \xi \in \R^d : |\phi(\xi)|>\alpha \right\} \right| \leq {C}/{\alpha},
\end{align*}
    for all $\alpha>0$. Then, for any $ 1<p\leq 2$, we have
    \begin{align*}
        \left( \int_{\R^d} |\mathcal{F}f(\xi)|^{p} \phi(\xi)^{2-p} \,d\xi \right)^{\frac{1}{p}}\leq C_p \|f\|_{L^p(\R^d)},
    \end{align*}
where the constant $C_p$ depends only on $p$ and $\phi$.
\end{theorem}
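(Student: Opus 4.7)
The plan is to reformulate the claim as an $L^p$-boundedness statement for a sublinear operator mapping into an $L^p$ space with respect to a weighted measure, and then establish the bound by interpolating between a trivial strong $L^2$ estimate and a weak $L^1$ estimate coming from the distributional hypothesis on $\phi$. Specifically, I would introduce the (positive, $\sigma$-finite) measure $d\nu(\xi) = \phi(\xi)^2\,d\xi$ and the sublinear operator
\begin{equation*}
Tf(\xi) := \frac{\mathcal{F}f(\xi)}{\phi(\xi)},
\end{equation*}
and recognize that the inequality to be proved is equivalent to the bound $\|Tf\|_{L^p(\nu)} \leq C_p \|f\|_{L^p(\R^d)}$ for $1 < p \leq 2$.

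The strong type $(2,2)$ bound is immediate from Plancherel: since $d\nu/\phi^2 = d\xi$, we have
\begin{equation*}
\int_{\R^d} |Tf(\xi)|^2\,d\nu(\xi) = \int_{\R^d} |\mathcal{F}f(\xi)|^2\,d\xi = \|f\|_{L^2(\R^d)}^2.
\end{equation*}
For the weak type $(1,1)$ bound, I would use the trivial pointwise estimate $|\mathcal{F}f(\xi)| \leq \|f\|_{L^1}$ to deduce that the super-level set $E_\alpha := \{\xi : |Tf(\xi)| > \alpha\}$ is contained in $\{\xi : \phi(\xi) < \|f\|_{L^1}/\alpha\}$. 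Setting $M := \|f\|_{L^1}/\alpha$ and invoking the layer cake representation together with the assumption $|\{\phi > s\}| \leq C/s$, the key computation is
\begin{equation*}
\nu(E_\alpha) \leq \int_{\{\phi < M\}} \phi(\xi)^2\,d\xi = \int_0^{M^2} |\{\xi : \sqrt{s} < \phi(\xi) < M\}|\,ds \leq \int_0^{M^2} \frac{C}{\sqrt{s}}\,ds = 2C M,
\end{equation*}
which yields $\nu(E_\alpha) \leq 2C\|f\|_{L^1}/\alpha$, the desired weak type $(1,1)$ inequality.

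With both endpoint bounds in hand, Marcinkiewicz interpolation (applied between $L^1(\R^d) \to L^{1,\infty}(\nu)$ and $L^2(\R^d) \to L^2(\nu)$) produces a strong bound $\|Tf\|_{L^p(\nu)} \leq C_p \|f\|_{L^p(\R^d)}$ for every $1 < p \leq 2$, which unravels to the claimed Paley-type inequality after writing $\phi^2 = \phi^p \cdot \phi^{2-p}$ inside the integral. The main technical obstacle is the layer cake computation establishing weak type $(1,1)$: one must be careful that the distributional hypothesis on $\phi$ controls the right tail while the measure $\nu$ weights values where $\phi$ is \emph{small}, so the truncation by $M$ in the layer cake is essential for producing the integrable behavior of $s^{-1/2}$ near $s=0$. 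Once this bookkeeping is carried out correctly, the remainder of the argument is standard interpolation theory.
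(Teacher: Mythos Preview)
Your proposal is correct and follows exactly the method the paper uses to prove its own analogue (Theorem~\ref{thm_paley}): introduce the weighted measure $\phi^2\,d\xi$, define $Tf=\mathcal{F}f/\phi$, get strong $(2,2)$ from Plancherel and weak $(1,1)$ from the layer-cake estimate $\nu(\{\phi<M\})\lesssim M$, then apply Marcinkiewicz. (The statement itself is only quoted from H\"ormander in the paper and not reproved there, but the paper's proof of the $(l,n)$-type Paley inequality is step-for-step the same argument you wrote, including the change of variable $\xi\to\xi^2$ in the layer cake.)
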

We have proven both the Paley and Hausdorff-Young inequalities for $(l,n)$ type functions on $G$. Moreover, by utilizing a version of the Stein-Weiss interpolation, we derived the Hausdorff–Young–Paley inequality in our setting. This inequality plays a crucial role in establishing $L^p$-$L^q$ estimates for the multiplier operator.

Moving forward, we explore the application of our main result. We establish $L^p$-$L^q$ estimates for the heat propagator of the $(l,n)$-th radial component of the Casimir element $\Omega$ (in $\S$ \ref{sec_application}). In contrast to the Euclidean setting or noncompact type Riemannian symmetric spaces (\cite[Theorem 3.2]{CGM93}), we observe growth in the solution of the heat equation near infinity within the context of the group $G$ (see Remark \ref{rem_dec_heat}). This growth arises from an additional discrete component of the Plancherel measure in $G$. Consequently, in the group scenario, the discrete part of the Plancherel measure contributes to the solution's growth as time progresses towards infinity in the $\Omega$-heat equation:
\begin{equation*}
\begin{aligned}
     u_t=\Omega u, \qquad
    u(0)=u_0 \in L^p(G)_{l,n}, 
\end{aligned}
\end{equation*}
for  $1<p\leq 2$, where $l,n \in \Z^{\tau}$.

Additionally, we establish the well-posedness of nonlinear abstract Cauchy problems in the space $L^\infty(0, T, L^2(G)_{l,n})$. Specifically, we initially consider the following heat equation:
\begin{equation} \label{heat_int}
u_t-|Bu(t)|^p=0, \quad u(0)=u_0,
\end{equation}
where $1<p<\infty$ and $B$ is a Fourier multiplier defined in \eqref{mult_(l,n)} on $L^2(G)_{l,n}$. We investigate the local well-posedness of the heat equation \eqref{heat_int} above. Next, we consider the initial value problem for the nonlinear wave equation:
\begin{align}\label{E-WNLE_int}
u_{tt}(t) - \Psi(t)|Bu(t)|^{p} = 0,
\end{align}
with the initial conditions $u(0)=u_0$ and $u_t(0)=u_1$, where $1< p<\infty$, $\Psi$ is a positive bounded function depending only on time, and $B$ is a linear operator initially defined in $L^2(G)_{l,n}$. We explore the global and local well-posedness of \eqref{E-WNLE_int} under certain conditions on the function $\Psi$.
  
  We conclude this section by providing an outline of this article.

In the upcoming section, we lay out the preliminaries on the group $G=\mathrm{SL}(2,\R)$ and discuss the spherical analysis of $(l,n)$ type functions on $G$. Following this, in Section \ref{sec_F_ineq}, we establish several key inequalities involving the Fourier transform for all $(l,n)$-type functions on $G$. Then, in Section \ref{main_result}, we define the multiplier operator within this context and prove our main result, Theorem \ref{lp-lq-mutli_int}. Finally, in Section \ref{sec_application}, we discuss various applications utilizing our main result.

\section{Preliminaries}\label{Preliminaries}
\subsection{Generalities}
The letters $\N$, $\Z$, $\R$, and $\C$ will respectively denote the set of all natural numbers, the ring of integers, and the fields of real and complex numbers. We denote the set of all non-negative integers and nonzero integers by $ \Z_+ $ and $\Z^*$, respectively. For $ z \in \C $, we use the notations $\Re z$ and $\Im z$ for real and imaginary parts of $z$, respectively. We shall follow the standard practice of using the letters $C$, $C_1$, $C_2$, etc., for positive constants, whose value may change from one line to another. Occasionally, the constants will be suffixed to show their dependencies on important parameters. We will use $X\lesssim Y$ or $Y\gtrsim X$ to denote the estimate $X\leq CY$ for some absolute constant $C>0$. We shall also use the notation $X\simeq Y$ for $X\lesssim Y$ and $Y\lesssim X$.  

\subsection{The group $ \mathrm{SL}(2,\R) $}\label{sec:The_group_SL(2,R)} From this section onwards, $G$ will always denote the group $\mathrm{SL}(2, \R)$, the group of $2\times 2$-matrices over $\R$ with determinant one. The Lie algebra of $G$, denoted by        $\mathfrak{g}$, can be realized as
\begin{align*}
    \mathfrak{g}= \mathrm{sl}(2,\R) =\left\{\begin{pmatrix}
				a  & b   \\
				c & -a
			\end{pmatrix} :  a,b,c \in \R \right\}.
\end{align*}
Important elements in $\mathfrak{g}$ are 
\begin{align*}
    X=\begin{pmatrix}
				0 & 1   \\
				-1 & 0
			\end{pmatrix},\,    
   H=\frac{1}{2}\begin{pmatrix}
				1 & 0   \\
				0 & -1
			\end{pmatrix},\,
   Y=\begin{pmatrix}
				0 & 1   \\
				0 & 0
			\end{pmatrix},  \, 
   \overline{Y}=\begin{pmatrix}
				0 & 0   \\
				-1 & 0
			\end{pmatrix}.   
\end{align*}
Corresponding elements in the group to the first three of these algebra elements are
\begin{align*}
    k_{\theta}&:=\begin{pmatrix}
				\cos \theta & \sin \theta   \\
				-\sin \theta & \cos \theta
			\end{pmatrix} =\exp{\left( \theta X \right)},\, \theta \in \R,\,    
   \qquad a_t := \begin{pmatrix}
				e^t  & 0   \\
				0 & e^{-t}  
			\end{pmatrix} =\exp{\left( 2t H  \right)},\, t \in \R,\\
 & \hspace{5cm}n_v := \begin{pmatrix}
				1  & v  \\
				0 & 1  
			\end{pmatrix} =  \exp{\left( v Y  \right)}, \, v \in \R.   
\end{align*}
Particular subgroups of $G$ are defined by
\begin{align*}
    K= \{ k_{\theta}: \theta \in  \R \}= \mathrm{SO}(2), \,    A=\left\{ a_t :  t \in \R \right\}, \text{ and } N=\left\{ n_v :  v \in \R \right\}.
\end{align*}
The Iwasawa decomposition for $G$ provides a diffeomorphism between $K \times A \times N$ and $G$. In other words, through the Iwasawa decomposition, any $x \in G$ can be uniquely expressed as $x = k_\theta a_t n_v$; using this, we write $$K(x) = k_\theta,\quad H(x) = t, \quad \text{and} \quad N(x)= v.$$ In fact, if $ x=\begin{pmatrix}
				a  & b  \\
				c & d  
			\end{pmatrix}\in \mathrm{SL}(2,\R)  $, then $\theta, t$ and $v$ are determined from
   \begin{equation}\label{formula_for_SL2R}
       e^{2t} =a^2+c^2,\, e^{i\theta}= \frac{a-ic}{\sqrt{a^2+c^2}}, \text{ and } v= \frac{ab+cd}{\sqrt{a^2+c^2}}.
   \end{equation}
        Next, let $A^+=\{ a_t \mid t>0 \}$; then the Cartan decomposition for $G$  gives $$G={K \overline{A^+}K} .$$ Using this we define $g^{+} $ as the $\R^+$ component in the Cartan decomposition, of the element  $g\in G$, that is we denote $a_{g^+}$ as the unique element such that $$g =k_1 a_{g^+} k_2, \quad \text{for some } k_1,k_2 \in K.$$
         For $ n \in \Z $, a complex-valued function $f$ on $G$ is said to be of  right  (resp. left) $K$-type $n$ if \begin{equation} \label{ K type n}
 f(xk) = f(x) e_n(k)\text{ (resp. } f(kx) =e_n(k) f(x) ) \text{ for all } k \in K \text{ and } x \in G,
  \end{equation} where $e_n(k_\theta) = e^{in \theta}$. For a class of functions $\mathcal{F} $ on $G$ (e.g., $L^1(G)$),  let $\mathcal{F}_n $ denote the corresponding subclass of functions of right $n$ type. We will denote $\mathcal{F}_{l,n} $ as the subclass of $\mathcal{F}_n$, which are also of left type $l$ and each element of $\mathcal{F}_{l,n} $ will be called a $ (l,n)  $ type function on $ G $.
    If $n=0$, we refer to such functions as $K$-biinvariant functions.

   By the Cartan decomposition we can extend a function $ f $ defined on $ \overline{A^+} $ to an $ (l,n) $ type function on $ G $ by 
   \begin{equation}\label{extension a function defined on closure of A+ to G}
				f( k_\theta a_t k_\vartheta ) = e_l(k_\theta) f(a_t) e_n(k_\vartheta), \quad \text{ for all } k_\theta , k_\vartheta \in K.
	\end{equation}  
 It is known that the following groups $G$, $ {N}$, and $K$ are unimodular, and we will denote the  (left or equivalently right) Haar measures of these groups as $dx$, $d {v}$ and  $dk$, where $\int_K dk=1$. Then we have the following integral formulae corresponding to the Cartan decomposition, which hold for any integrable function $f$:
			\begin{equation*}\label{definition of integration}
				\int_G f(x) dx = \int_K \int_{\R^+} \int_K f(k_1 a_t k_2) \Delta(t) dk_1 dt dk_2,
			\end{equation*}
			where $\Delta(t)= 2 \sinh 2t$.

\subsection{Spherical analysis of $(l,n)$ type functions}
		We are going to define Fourier transforms of suitable $(l,n)$ functions using representations of $G$ (see \cite[Chapter 9]{Barker}).

  Let $ \what M $ denote the equivalence classes of irreducible representations of $ M= \{  \pm I \} $, where $ I $ is the identity matrix. Then \[ \what M = \{ \tau_ +, \tau_-\}, \]
			where $ \tau_ +(\pm I)=1 $ and $ \tau_-(\pm I) =\pm 1 $. For each $\tau \in \what M$,  $\Z^{\tau} $ stands for the set of even integers for $\tau =\tau_+$, the set of odd integers for $\tau =\tau_-$ and $ -\tau $ will denote the opposite parity of $ \tau $.  
			
   \noindent We define for $l,n \in \Z^{\tau}$, \begin{align}\label{def_Gam_l,n}
				\Gamma_{l,n} = \lbrace
					k\in \Z^{-\tau} : 0 <k < \min\{l,n\}, \text { or } \max\{l,n\}< k<0 \rbrace.
			\end{align}
		For $\lambda \in \C $ and $ \tau \in \what M $,  let $(\pi_{\tau, \lambda }, H_\tau)$ be the principal series representation of $ G $ given by
		\begin{equation}\label{definition of principal series representation}
			{\left(\pi_{\tau, \lambda}(x) e_n\right)(k)=e^{(i\lambda - 1)H(xk)} e_n\left(K(x^{-1}k^{-1})\right)^{-1}}
		\end{equation}
		for all $ x \in G, k\in K $, where {$H_\tau$}  is the subspace of $L^2(K)$ generated by the orthonormal set  $\{ e_n \mid n \in \Z^\tau \}$. This representation is unitary if and only if $\lambda \in \R$.   For $ \lambda =0  $, the representation $\pi_{\tau_-, 0}$  has two irreducible subrepresentations, the so called  {mock discrete series}. We will denote them by $D_+$ and $D_-$. The representation spaces of  {$D_+$ and $D_-$ contain  $e_n \in L^2(K)$ respectively for positive odd $n$'s and negative odd $n$'s}.   For each  $k\in\mathbb Z^\ast$ (set of nonzero integers), there is a discrete series representation $\pi_{ik}$, which occurs as a subrepresentation of $\pi_{\tau, i|k|}$, $k\in \Z\setminus \Z^{\tau}$   (see \cite[p.19]{Barker}).  We define for    $ k \in \Z^*  $
		\begin{equation*}
			\Z(k) = \begin{cases}\{ m \in \Z^{\tau} : m \geq k+1 \} \text{ if }  k\geq 1 \\
				\{ m \in \Z^{\tau} : m \leq k-1 \} \text { if }  k\leq -1.
			\end{cases}
		\end{equation*}
		For $n\in \Z^{\tau}$,  the canonical matrix coefficient for the principal series
		\begin{equation}\label{definition of phi sigma lamda m, n}
			\phi_{\tau,\lambda}^{l,n}(x) := \left\langle \pi_{\tau, \lambda}(x) e_l, e_n \right\rangle  =\int_K e^{-(i\lambda+1)H(xk)} e_l(k^{-1}) \overline{e_n(K(xk)^{-1})} dk,  
		\end{equation}
		are functions of $ (n,l) $ type.   We note  that $\phi_{\tau_+, \lambda}^{0,0}$ corresponds to the elementary spherical function commonly denoted as $\phi_\lambda$.
  
  For $k \in \Z^*$ and $n\in \Z(k)$ the canonical matrix coefficient of the discrete series representation is
            \begin{equation*} {\psi^{l,n}_{ik}(x):=\langle \pi_{ik}(x) e_l^k, e_n^k\rangle_k}, 
		\end{equation*}
		where ${e_n^k}$ are the renormalized basis and $\langle \,,\rangle_k$ is the renormalized inner product for $\pi_{ik}$, see \cite[p. 20]{Barker} for more details. These functions $\phi_{\tau,\lambda}^{l,n}$ and $\psi^{l,n}_{ik}$  are  also known as spherical functions of  $(n,l)$ type. We will denote $ H_k $ as the Hilbert space generated by $ \{ e_n^k : n \in \Z(k)\} $.   It is known  \cite[Prop. 7.3]{Barker} that 
         \begin{equation}\label{relation between psi n,n and phi {n,n}}
			\psi_{ik}^{l,n}(x)=  \eta^{l,n}(k) \phi_{\tau, i|k|}^{l,n}(x), \quad \text{for all } x \in G,
		\end{equation} where $\tau \in \what{M}$ is determined by $ k \in \Z^{-\tau} $ and $\eta^{l,n}(k)$ is a constant does not depend on $x$. Furthermore, we have the following estimate \cite[(8.1)]{Barker} of $\psi_{ik}^{l,n}$. This estimation constitutes a minor yet crucial extension of a result initially presented by Trombi and Varadarajan in the $\mathrm{SL}(2,\R)$ case.
\begin{theorem}
    Let $k\in \N$, There exist constants $C, r_1, r_2, r_3, r_4\geq 0$ such that
   \begin{align}\label{est_psi_l,n}
       |\psi_{ik}^{l,n}(x)| \leq C (1+|l|)^{r_1}(1+|n|)^{r_2}(1+|k|)^{r_3}(1+ |H(x)|)^{r_4} \phi_{\tau_+, 0}^{0,0}(x)^{1+l}.
   \end{align}
\end{theorem}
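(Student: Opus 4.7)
The plan is to reduce the problem to an estimate on the principal-series matrix coefficient $\phi_{\tau,i|k|}^{l,n}$ by invoking the relation \eqref{relation between psi n,n and phi {n,n}}. Writing $\psi_{ik}^{l,n}(x)=\eta^{l,n}(k)\,\phi_{\tau,i|k|}^{l,n}(x)$, the task splits into: (i) computing and controlling the normalizing constant $\eta^{l,n}(k)$ polynomially in $l,n,k$, and (ii) proving the desired uniform bound for $\phi_{\tau,i|k|}^{l,n}$ on the positive Weyl chamber $A^+$. For (i), the constant $\eta^{l,n}(k)$ comes from the rescaling between the $L^2(K)$-basis $\{e_n\}$ and Barker's renormalized basis $\{e_n^k\}$; it is explicit as a ratio of Gamma functions and is polynomial, of controlled degree, in $l,n,k$. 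This will account for the factors $(1+|l|)^{r_1}(1+|n|)^{r_2}(1+|k|)^{r_3}$ on the right-hand side.

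For (ii), by the Cartan decomposition \eqref{extension a function defined on closure of A+ to G} it suffices to estimate $\phi_{\tau,i|k|}^{l,n}(a_t)$ for $t\geq 0$. Using the integral representation \eqref{definition of phi sigma lamda m, n} at $\lambda=i|k|$,
\begin{equation*}
\phi_{\tau,i|k|}^{l,n}(a_t) \;=\; \int_K e^{(|k|-1)H(a_tk)}\, e_l(k^{-1})\,\overline{e_n(K(a_tk)^{-1})}\,dk,
\end{equation*}
together with the explicit Iwasawa formulas \eqref{formula_for_SL2R} for $H(a_tk_\theta)$, I would rewrite this integral as a classical Jacobi (hypergeometric) function of $t$ whose parameters are governed by $l,n,k$. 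The resulting Jacobi function admits a Mehler-type integral representation, and comparing with the well-known asymptotics $\phi_{\tau_+,0}^{0,0}(a_t)\asymp (1+t)\,e^{-t}$ of the elementary spherical function, one obtains, after suitable contour manipulations, a bound of the shape $C\,P(l,n,k)\,(1+t)^{r_4}\,\phi_{\tau_+,0}^{0,0}(a_t)^{1+l}$ for a polynomial $P$. The factor $(1+|H(x)|)^{r_4}$ in the final inequality arises precisely from the polynomial corrections to pure exponential decay produced by the $e^{(|k|-1)H(a_tk)}$ term near the boundary of $A^+$.

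The hard step is uniformly tracking the dependence on the parameters $(l,n,k)$. The hypergeometric series for $\phi_{\tau,i|k|}^{l,n}(a_t)$ does not converge uniformly in these parameters, so one cannot simply differentiate under the integral sign in \eqref{definition of phi sigma lamda m, n}; one has to pass to the Mehler-type representation, where the parameter dependence is explicit and integrable, and then use repeated integration by parts to trade polynomial growth in $l,n,k$ for the extra factor $(1+|H(x)|)^{r_4}$ and the sharper exponential decay encoded by $\phi_{\tau_+,0}^{0,0}(x)^{1+l}$. This quantitative refinement of the rough asymptotics is the Trombi–Varadarajan ingredient that Barker's estimate \eqref{est_psi_l,n} extends, and it is the only non-formal point in the argument; the remaining steps are bookkeeping of polynomials in $l,n,k,t$.
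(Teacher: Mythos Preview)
The paper does not supply a proof of this theorem at all: it is quoted directly from Barker \cite[(8.1)]{Barker}, with the single remark that it ``constitutes a minor yet crucial extension of a result initially presented by Trombi and Varadarajan in the $\mathrm{SL}(2,\R)$ case.'' There is therefore no paper proof to compare against.

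Your outline is a plausible sketch of how such a proof would go, and it names the right ingredients: the reduction via \eqref{relation between psi n,n and phi {n,n}} to the principal-series coefficient $\phi_{\tau,i|k|}^{l,n}$, explicit polynomial control of the renormalization constant $\eta^{l,n}(k)$, and then a Jacobi/hypergeometric representation on $A^+$ together with the Trombi--Varadarajan machinery to extract the exponential decay with uniform polynomial dependence on $(l,n,k)$. That said, what you have written is a plan rather than a proof. The sentence about passing to a Mehler-type representation ``where the parameter dependence is explicit and integrable'' and then integrating by parts is precisely the technical core, and you have neither written down that representation nor indicated which integration by parts produces the exponent $1+l$ on $\phi_{\tau_+,0}^{0,0}$. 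One specific point to watch: the exponent in the stated bound is $1+l$, tied to the $K$-type, not $1+|k|$; this comes through the constraint $l\in\Z(k)$ (so $l\geq k+1$ when $k\in\N$), and your sketch does not explain how that particular exponent falls out of the analysis. If you intend to supply an actual proof, you would need to carry out these steps explicitly, essentially reproducing the relevant argument from Barker's paper.
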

 Additionally, we recall that from \cite[(3.5)]{ADY_96} (see also\cite[(3.2)]{Barker}) for all $t\in \R$,
\begin{align}\label{est_phi_0,0}
    \phi_{\tau_+, 0}^{0,0}(a_t)\asymp (1+|t|) e^{-|t|}.
\end{align}

 For a given $\tau \in \what{M}$ and $\lambda \in \C$, we define the infinitesimal representation of $ \mathfrak{g}$ and $K$ associated to  $(\pi_{\tau, \lambda }, H_\tau)$ as follows
\begin{align*}
    \pi_{\tau, \lambda }(X)v: = \frac{d}{dt } \pi_{\tau, \lambda }( \exp (tX)) v\big|_{t=0}
\end{align*}
 for $X\in \mathfrak{g}$, $v \in H_{\tau}$. Let $\Omega$ denote the Casimir element on $G$ defined by 
 (\cite[(2.6)]{Barker})
 \begin{align} \label{Casimirele}
     \Omega = H^2+H- \overline{Y} Y.
 \end{align}
 It is known that the principal series representations act as scalars on the Casimir element $\Omega$; in fact 
  \begin{equation*} 
    \pi_{\tau, \lambda }(\Omega)  = -\frac{\lambda^2+1}{4} .
    \end{equation*}
 Moreover,  the canonical matrix coefficient of principal series $\phi_{\tau, \lambda}^{l,n}$ satisfies the differential equation 
    \begin{equation}\label{ef_phi_n,n}
    \Omega f = -\frac{\lambda^2+1}{4} f,
    \end{equation}
     where $\Omega$ acts as a biinvariant differential operator on $G$, please see \cite[(4.7)]{Barker}.

  We remark that we use a different parameterization of the representations and spherical functions from Barker \cite{Barker}. According to our definition in \eqref{definition of principal series representation}, the unitary dual of $\mathrm{SL}(2,\R)$ is $\R$, while according to Barker's convention, the unitary dual of $\mathrm{SL}(2,\R)$ is $i\R$. As a result, our $\pi_{\lambda}$ corresponds to his $\pi_{-i\lambda}$, and similarly, $\phi_{\tau,\lambda}^{l,n}$ and $\psi_{ik}^{l,n}$ are reparametrized accordingly.   This choice of parametrization offers a clearer analogy with the general semisimple Lie groups case, making our analysis more transparent.
     
   Let $l,n\in \Z^\tau.$ Then for a smooth compactly supported $(l,n)$ type function $f$  on $G$, the principal series Fourier transform of $f$ is defined by  \begin{equation*}
			\what{f_H}(\lambda) = \int_G f(x) {\phi_{\tau,\lambda}^{l,n}}(x^{-1}) dx,
		\end{equation*}
   for all $\lambda \in \R$, and  the discrete series Fourier transform is defined by 
        \begin{equation*}
			\what{f_B}(k)  = \int_G f(x) {\psi_{k}^{l,n}}(x^{-1}) dx,
		\end{equation*}
for all $k \in i\Gamma_{l,n}$. Then the inversion formula is given by \cite[Theorem 10.5]{Barker} (see also \cite{MR0047055}):
\begin{equation}\label{inversion_l,n}
    f(x)= \frac{1}{4\pi^2} \int_{\R} \what {f}_H(\lambda) \phi^{n,l}_{\tau,\lambda}(x)  \mu(\tau,\lambda) d\lambda + \frac{1}{2\pi} \sum_{k \in i\Gamma_{l,n}} \what{f}_B (k) \psi^{n,l}_{k}(x) |k|, 
\end{equation}
where $\tau \in \what M$ is determined by $n \in \Z^\tau$, and $\mu(\tau,\lambda)$ is given by \cite[(10.1)]{Barker}
    \begin{align}\label{defn_mu_t}
	\mu(\tau,\lambda)  = \begin{cases}
	\frac{\lambda \pi} {2} \tanh (\lambda \pi /2) , &\text{ if } \tau =\tau_+\\
	\frac{\lambda \pi}{2} \coth (\lambda \pi /2) , &\text{ if } \tau =\tau_-,
	 	\end{cases}
			 \end{align}
  for all $\lambda \in \R$.  We also have the following Plancherel formula from \cite[Theorem 2]{MR1011896} (see also \cite[page 47]{Barker}), which holds for all $f \in L^1 (G)_{l,n}\cap L^2(G)_{l,n}$.
\begin{equation}\label{plancherel,l,n}
    \int_G |f(x)|^2 \, dx = \frac{1}{4\pi^2} \int_{\R} |\what {f}_H(\lambda)|^2  \mu(\tau,\lambda)\, d\lambda + \frac{1}{2\pi} \sum_{k \in i\Gamma_{l,n}} |\what{f}_B (k)|^2   |k|.
\end{equation}

\section{Fourier inequalities on all matrix coefficients on $\mathrm{SL}(2,\R)$}\label{sec_F_ineq}
In this section, we present the analog version of some classical inequalities mentioned in the introduction. We will start by introducing the Hausdorff-Young inequality for $(l,n)$-type functions on $G$.  To do this, let us consider the measure space $\left(\R \cup i\Gamma_{l,n},d{\tilde{\nu}}_{l,n} \right)$, where $d{\tilde{\nu}}_{l,n}$ is defined by 
    \begin{align*}
        \int_{\R\cup i \Gamma_{l,n}} F(\zeta) d{\tilde{\nu}}_{l,n}(\zeta) =  \frac{1}{4\pi^2}  \int_{\R} F(\lambda)   \mu(\tau,\lambda) d\lambda +  \frac{1}{2\pi}\sum_{k\in i\Gamma_{l,n}}  F(k)  |k|,
    \end{align*}
    for any suitable function $F$ on $\R \cup i\Gamma_{l,n}$. For  $l,n \in \Z^{\tau}$, we also define the operator $\mathcal{F}_{l,n}$ on $ C_c^{\infty}(G)_{l,n}$ by 
    \begin{align}\label{def_F}
        \mathcal{F}_{l,n}f(\zeta)= \begin{cases}
            \what{f}_H(\zeta) \quad \text{if }\zeta \in \R,\\
            \\
            \what{f}_B(\zeta) \quad \text{if }\zeta \in i\Gamma_{l,n},
        \end{cases} 
    \end{align}
    for $f \in C_c^{\infty}(G)_{l,n}$.  Then we have the following inequality. For $1 \leq p \leq \infty$, let $p' = p/(p-1)$ denotes the conjugate exponent of $p$. When $p=\infty$, by $(\int |F|^{p})^{\frac{1}{p}}$, will represent the $L^{\infty}$ norm of $F$.
    \begin{theorem}\label{thm_hy}
Let $1\leq p\leq 2$ and $l,n \in \Z^{\tau}$. Then there exists a constant $C=C(p.l,n)>0$ such that
\begin{align*}
\left( \int_{\R\cup i \Gamma_{l,n}}  \left| \mathcal{F}_{l,n}f(\zeta) \right|^{p'} d{\tilde{\nu}}_{l,n}(\zeta) \right)^{\frac{1}{p'}} \leq C \|f\|_{L^p(G)}
\end{align*}
for all $f \in C_c^{\infty}(G)_{l,n}$. Moreover, for all $f \in C_c^{\infty}(G)_{l,n}$, there exists a constant $C=C(p.l,n)>0$ such that
\begin{align}\label{inv_Hausd}
    \left(\int_G |f(x)|^{p'} dx\right)^{\frac{1}{p'}} \leq C \left(\int_{\R\cup i \Gamma_{l,n}}  |\mathcal{F}_{l,n}f(\zeta)|^p d{\tilde{\nu}}_{l,n}(\zeta) \right)^{\frac{1}{p}}.
\end{align}
\end{theorem}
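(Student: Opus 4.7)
The plan is to follow the classical Riesz--Thorin interpolation strategy: establish the two endpoint estimates at $p=1$ and $p=2$ for the map $\mathcal{F}_{l,n}\colon L^p(G)_{l,n} \to L^{p'}(\R \cup i\Gamma_{l,n}, d\tilde{\nu}_{l,n})$ and then interpolate to obtain the assertion on the full range $1 \leq p \leq 2$. The endpoint $p=2$ is furnished directly by the Plancherel formula \eqref{plancherel,l,n}, which gives the stronger statement that $\mathcal{F}_{l,n}$ is an isometry at that point.

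For the endpoint $p=1$, the key observation is that every matrix coefficient appearing in the definition \eqref{def_F} of $\mathcal{F}_{l,n}$ is bounded by $1$ uniformly in $x$ and in the spectral parameter. Indeed, since $\pi_{\tau,\lambda}$ is unitary for $\lambda \in \R$, Cauchy--Schwarz gives $|\phi_{\tau,\lambda}^{l,n}(x)| = |\langle \pi_{\tau,\lambda}(x) e_l, e_n\rangle| \leq 1$; similarly, since $\pi_{ik}$ is unitary on the Hilbert space $(H_k,\langle\cdot,\cdot\rangle_k)$ in which $\{e_n^k\}$ is orthonormal, we have $|\psi_k^{l,n}(x)| \leq 1$ for every $k\in i\Gamma_{l,n}$ and $x\in G$. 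Inserting these pointwise bounds into the integrals defining $\what{f}_H(\lambda)$ and $\what{f}_B(k)$ yields $\|\mathcal{F}_{l,n}f\|_{L^\infty(d\tilde{\nu}_{l,n})} \leq \|f\|_{L^1(G)}$. Riesz--Thorin interpolation between this estimate and the Plancherel identity then produces the first inequality with a constant $C=C(p,l,n)$.

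For the inverse inequality \eqref{inv_Hausd}, I would repeat the argument on the inversion side. Define the operator $S$ on suitable functions on $\R \cup i\Gamma_{l,n}$ by
\[
(SF)(x) = \frac{1}{4\pi^2}\int_{\R} F(\lambda)\phi_{\tau,\lambda}^{n,l}(x)\mu(\tau,\lambda)\,d\lambda + \frac{1}{2\pi}\sum_{k\in i\Gamma_{l,n}} F(k)\psi_{k}^{n,l}(x)|k|,
\]
so that $S(\mathcal{F}_{l,n}f) = f$ on $C_c^\infty(G)_{l,n}$ by the inversion formula \eqref{inversion_l,n}. The same uniform bounds on $\phi^{n,l}_{\tau,\lambda}$ and $\psi^{n,l}_k$ produce $\|SF\|_{L^\infty(G)} \leq \|F\|_{L^1(d\tilde{\nu}_{l,n})}$, and Plancherel combined with the inversion identity gives $\|SF\|_{L^2(G)} = \|F\|_{L^2(d\tilde{\nu}_{l,n})}$. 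A second application of Riesz--Thorin yields $\|SF\|_{L^{p'}(G)} \leq C\|F\|_{L^p(d\tilde{\nu}_{l,n})}$ for $1\leq p \leq 2$, and specialising to $F = \mathcal{F}_{l,n}f$ produces \eqref{inv_Hausd}.

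I do not expect any substantial obstacle in this proof; the only point deserving a moment's care is the uniform bound $|\psi_k^{l,n}(x)| \leq 1$, which relies on confirming from \cite{Barker} that the renormalised basis $\{e_n^k\}$ is orthonormal with respect to $\langle\cdot,\cdot\rangle_k$ and that $\pi_{ik}$ acts unitarily on $(H_k,\langle\cdot,\cdot\rangle_k)$. The mixed nature of the measure $d\tilde{\nu}_{l,n}$ (absolutely continuous on $\R$, purely atomic on the finite set $i\Gamma_{l,n}$) creates no difficulty since both components are nonnegative and Riesz--Thorin applies without modification; the finiteness of $\Gamma_{l,n}$ in fact makes the atomic part trivially well behaved.
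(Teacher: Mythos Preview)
Your approach is essentially the paper's: Riesz--Thorin interpolation between the Plancherel identity at $p=2$ and an $L^1\to L^\infty$ bound obtained from uniform pointwise estimates on the matrix coefficients, applied first to $\mathcal{F}_{l,n}$ and then to the inversion operator. The only noteworthy difference is in bounding $|\psi_k^{l,n}|$: the paper invokes the Trombi--Varadarajan type estimate \eqref{est_psi_l,n} together with \eqref{est_phi_0,0} to get $|\psi_k^{l,n}(x)|\le C_{l,n}$, whereas you appeal directly to unitarity of the discrete series $\pi_{ik}$ and orthonormality of $\{e_n^k\}$ to obtain the sharper bound $|\psi_k^{l,n}(x)|\le 1$; your route is cleaner and avoids the heavier machinery, though it makes no difference to the final statement since the theorem allows a constant depending on $l,n$.
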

\begin{proof}
   Let $f \in C_c^{\infty}(G)_{l,n}$. By the Plancherel formula \eqref{plancherel,l,n},  we have 
    \begin{equation}\label{Pla_form}
        \begin{aligned}
        \int_{\R\cup i \Gamma_{l,n}} |\mathcal{F}_{l,n}f(\zeta)|^2d{\tilde{\nu}}_{l,n}(\zeta)&= \frac{1}{4\pi^2}  \int_{\R} |\what{f}_H(\lambda)|^2  \mu(\tau,\lambda) d\lambda +  \frac{1}{2\pi}\sum_{k\in i\Gamma_{l,n}}  |\what{f}_B(k)|^2  |k|\\
        &=\int_G |f(x)|^2 dx,
    \end{aligned}
    \end{equation}
    whence we obtain that the operator $\mathcal{F}_{l,n}$ is bounded from $L^2(G)_{l,n}$ into $L^2(\R \cup i\Gamma_{l,n}, d\tilde{\nu}_{l,n})$. 
     We can write from \eqref{definition of phi sigma lamda m, n} that for $\lambda \in \R$
     \begin{align*}
         |\phi_{\tau,\lambda}^{l,n}(x)|&= \left|\int_K e^{-(i\lambda+1)H(xk)} e_l(k^{-1}) \overline{e_n(K(xk)^{-1})} dk \right|\\
         & \leq \int_K e^{-H(xk)}dk\\
         &= \phi^{0,0}_{\tau^+, 0}(x),
     \end{align*} 
    which, by the Helgason-Johnson theorem, is bounded. Moreover, we have
        \begin{align*}
    |\phi^{0,0}_{\tau^+, \lambda}(x)| \leq 1
        \end{align*}
        for all $\lambda \in \R$ and $x \in G$. Thus, we obtain 
    \begin{align}\label{est_inf_c}
        \sup_{\lambda \in \R}|\what{f}_{H}(\lambda)| \leq  \|f\|_{L^1(G)_{l,n}}
    \end{align}
 for all $\lambda \in \R$.   Similarly, using the estimate \eqref{est_psi_l,n} of 
 $\psi_{ik}^{l,n}$ and \eqref{est_phi_0,0}, we get
    \begin{align}\label{est_inf_d}
        \max_{k\in i\Gamma_{l,n}} |\what{f}_B(k)| \leq C_{l,n}\|f\|_{L^1(G)_{l,n}}.
    \end{align}
    Therefore, inequalities \eqref{est_inf_c} and \eqref{est_inf_d} imply that $\mathcal{F}_{l,n}$ is a bounded map from $L^1(G)_{l,n}$ to $L^{\infty} (\R \cup i\Gamma_{l,n})$. Finally, by leveraging the properties that the operator $\mathcal{F}$ exhibits of type $(1,\infty)$ and $(2,2)$, we achieve the Hausdroff-Young inequality through the application of the Riesz-Thorin interpolation theorem.

    Next to prove \eqref{inv_Hausd} again using the estimates of the spherical functions $\phi^{l,n}_{\tau,\lambda}$ and $\psi_{ik}^{l,n}$, for $f\in C_c^{\infty}(G)_{l,n}$ we have 
    \begin{align}\label{eqn_1,inf}
        \|f\|_{L^{\infty}(G)}\leq  \frac{1}{4\pi^2} \int_{\R}  |\what{f}_H(\lambda)|  \mu(\tau,\lambda) d\lambda 
          +\frac{1}{2\pi}\sum_{k\in i\Gamma_{l,n}} |\what{f}_B(k)|   |k| = \int\limits_{\R \cup i\Gamma_{l,n}} |\mathcal{F}_{l,n}(f)(\zeta)| d {\tilde{\nu}_{l,n}(\zeta)},
    \end{align}
and then again using the Riesz-Thorin interpolation to the inequalities \eqref{Pla_form} and \eqref{eqn_1,inf} we obtain the desired inequality.   
\end{proof}

We now consider a Paley-type inequality for $(l,n)$-type functions on $G$.
\begin{theorem}\label{thm_paley}
 Let $1< p\leq 2$. Assume that $\psi$ is a positive function on $\R\cup i\Gamma_{l,n}$ satisfying the following condition 
	\begin{align*}
	 	 \|\psi\|_{\tau,\infty} : = \sup_{\alpha>0} \alpha  \int\limits_{\substack{\{\lambda \in \R : \psi(\lambda)>\alpha\} }} \mu(\tau,\lambda) \,d\lambda <\infty.
	 \end{align*}
  Then we have 
  \begin{align*}
          \left(\frac{1}{4\pi^2}\int_{\R}   {|\what {f}_{H}(\lambda)|}  ^{p}  \psi(\lambda)^{2-p}  \mu(\tau,\lambda)\,  d\lambda+  \frac{1}{2\pi}\sum_{k\in i\Gamma_{l,n}}   |\what {f}_{B}(k) |^p   \psi(k)^{2-p} |k|\right)^{\frac{1}{p}} & \\
          &\hspace{-4.5cm}\lesssim \left(   \|\psi\|_{\tau,\infty}+   \sum_{k \in i\Gamma_{l,n}}   \psi(k) |k|    \right)^{\frac{2-p}{p}} \|f\|_{L^p(G)},
     \end{align*}
     for all $f\in L^p(G)_{l,n}$.
\end{theorem}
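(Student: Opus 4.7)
The plan is to reduce the claim to an $L^p\to L^p$ boundedness statement for a single sublinear operator, and then obtain it by Marcinkiewicz interpolation between an easy $L^2$ bound and a weak-type $(1,1)$ bound. Define the measure
\begin{equation*}
d\nu_{l,n}(\zeta):=\psi(\zeta)^{2}\,d\tilde\nu_{l,n}(\zeta) \qquad \text{on } \R\cup i\Gamma_{l,n},
\end{equation*}
and the (sub)linear operator $Tf(\zeta):=\mathcal F_{l,n}f(\zeta)/\psi(\zeta)$, which is well defined since $\psi>0$. A direct computation shows that the left-hand side of the desired inequality equals $\|Tf\|_{L^p(\nu_{l,n})}$, so the task reduces to proving the operator bound
\begin{equation*}
\|Tf\|_{L^p(\nu_{l,n})}\;\lesssim\;\bigl(\|\psi\|_{\tau,\infty}+\textstyle\sum_{k\in i\Gamma_{l,n}}\psi(k)|k|\bigr)^{(2-p)/p}\,\|f\|_{L^p(G)}.
\end{equation*}

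\textbf{The $(2,2)$ endpoint.} Here Plancherel's formula \eqref{plancherel,l,n} gives the bound for free:
\begin{equation*}
\|Tf\|_{L^2(\nu_{l,n})}^{2}=\int_{\R\cup i\Gamma_{l,n}}\frac{|\mathcal F_{l,n}f(\zeta)|^{2}}{\psi(\zeta)^{2}}\,\psi(\zeta)^{2}\,d\tilde\nu_{l,n}(\zeta)=\|f\|_{L^2(G)}^{2},
\end{equation*}
so $T:L^{2}(G)_{l,n}\to L^{2}(\nu_{l,n})$ is bounded with constant $1$.

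\textbf{The weak $(1,1)$ endpoint.} This is the main technical step. From the proof of Theorem \ref{thm_hy}, we have the uniform bounds \eqref{est_inf_c} and \eqref{est_inf_d}, which together yield a constant $C_{l,n}>0$ with $|\mathcal F_{l,n}f(\zeta)|\leq C_{l,n}\|f\|_{L^1(G)}$ for every $\zeta\in\R\cup i\Gamma_{l,n}$. Consequently
\begin{equation*}
\{\zeta:|Tf(\zeta)|>\alpha\}\subseteq \{\zeta:\psi(\zeta)<M\},\qquad M:=C_{l,n}\|f\|_{L^1(G)}/\alpha.
\end{equation*}
It remains to estimate $\nu_{l,n}(\{\psi<M\})$. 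Writing $\psi(\zeta)^{2}=2\int_{0}^{\psi(\zeta)}t\,dt$ and exchanging the order of integration, the continuous part becomes
\begin{equation*}
\int_{\{\psi<M\}}\psi(\lambda)^{2}\,\mu(\tau,\lambda)\,d\lambda=2\int_{0}^{M}t\left(\int_{\{t<\psi(\lambda)<M\}}\mu(\tau,\lambda)\,d\lambda\right)dt\leq 2\|\psi\|_{\tau,\infty}\int_{0}^{M}dt=2M\|\psi\|_{\tau,\infty},
\end{equation*}
where we used $\int_{\{\psi>t\}}\mu(\tau,\lambda)\,d\lambda\leq \|\psi\|_{\tau,\infty}/t$ from the hypothesis. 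For the discrete part we simply estimate $\psi(k)^{2}\leq M\psi(k)$ on the set $\{\psi(k)<M\}$, so
\begin{equation*}
\sum_{k\in i\Gamma_{l,n},\,\psi(k)<M}\psi(k)^{2}|k|\leq M\sum_{k\in i\Gamma_{l,n}}\psi(k)|k|.
\end{equation*}
Combining both contributions gives $\nu_{l,n}(\{|Tf|>\alpha\})\lesssim \|f\|_{L^1(G)}\,\alpha^{-1}\bigl(\|\psi\|_{\tau,\infty}+\sum_{k}\psi(k)|k|\bigr)$, so $T$ is of weak type $(1,1)$ with constant $A=C(\|\psi\|_{\tau,\infty}+\sum_{k\in i\Gamma_{l,n}}\psi(k)|k|)$.

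\textbf{Interpolation and conclusion.} Applying the Marcinkiewicz interpolation theorem to $T$, with weak-type $(1,1)$ constant $A$ and (strong, hence weak) type $(2,2)$ constant $1$, yields strong type $(p,p)$ for each $1<p<2$ with operator norm bounded by $C_{p}A^{1-\theta}\cdot 1^{\theta}$ where $\theta=2-2/p$, i.e. by $C_{p}A^{(2-p)/p}$. Substituting the value of $A$ and unpacking $\|Tf\|_{L^p(\nu_{l,n})}$ delivers precisely the inequality of the theorem. The main obstacle is the weak-type $(1,1)$ estimate, specifically the layer-cake manipulation on the continuous part, which is exactly where the hypothesis $\|\psi\|_{\tau,\infty}<\infty$ is used; the discrete part is comparatively trivial because $\Gamma_{l,n}$ is finite.
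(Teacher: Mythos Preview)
Your proof is correct and follows essentially the same route as the paper: define the weighted measure $\psi^2\,d\tilde\nu_{l,n}$ and the operator $Tf=\mathcal F_{l,n}f/\psi$, get the $(2,2)$ bound from Plancherel, establish weak $(1,1)$ via the layer-cake computation on the continuous part together with the trivial estimate on the finite discrete part, and conclude by Marcinkiewicz interpolation. The only cosmetic difference is that the paper handles the discrete piece by a strong $L^1\to L^1$ bound rather than folding it into the weak-type estimate as you do, but this changes nothing.
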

\begin{proof}
    Let us define a measure on $\R \cup i\Gamma_{l,n}$ by the following:
    \begin{align}\label{meas_dual}
        \int_{\R\cup i \Gamma_{l,n}} F(\zeta) d\vartheta_{l,n}(\zeta) =  \frac{1}{4\pi^2}  \int_{\R} F(\lambda) \psi(\lambda)^2 \mu(\tau,\lambda)\,d\lambda +  \frac{1}{2\pi}\sum_{k\in i\Gamma_{l,n}} F(k) \psi(k)^2|k|,
    \end{align}
    for any suitable function $F$ on $\R \cup i\Gamma_{l,n}$. Then it is easy to check that  $  \vartheta_{l,n}$ is a well-defined measure on  $\R \cup i\Gamma_{l,n}$.
Now, let us define an operator 
\begin{align}\label{defn_of_T}
    Tf(\zeta) = \begin{cases}
       \frac{ \what{f}_H(\zeta)}{\psi(\zeta)}, \quad \text{if } \zeta \in \R,\\
\\
        \frac{\what{f}_B(\zeta)}{\psi(\zeta)}, \quad \text{if } \zeta \in i\Gamma_{l,n}.
    \end{cases}
\end{align}
Then by the Plancherel theorem we have for $f\in C_c^{\infty}(G)_{l,n}$,
\begin{align*}
     \int_{\R\cup i\Gamma_{l,n}}  |Tf(\zeta)|^2  \psi(\zeta)^2 d\vartheta_{l,n}(\zeta) &= \frac{1}{4\pi^2}  \int_{\R} \frac{|\what{f}_H(\lambda)|^2}{\psi(\lambda)^2} \psi(\lambda)^2 \mu(\tau,\lambda)\,d\lambda +  \frac{1}{2\pi}\sum_{k\in i\Gamma_{l,n}} \frac{|\what{f}_B(k)|^2}{\psi(k)^2} \psi(k)^2 |k|\\
    &=  \frac{1}{4\pi^2}  \int_{\R} |\what f_H(\lambda)|^2 \mu(\tau,\lambda) \,d\lambda +  \frac{1}{2\pi}\sum_{k\in i\Gamma_{l,n}} |\what {f}_B(k)|^2|k|\\
    & = \|f\|_{L^2(G)}^2.
\end{align*}
Thus, we have established that $T$ is a bounded operator from $L^2(G)_{l,n}$ to $L^2(\mathbb{R} \cup i\Gamma_{l,n}, $ $\psi(\zeta)^2 d\vartheta_{l,n}(\zeta))$. Our next objective is to demonstrate that $T$ is also weak-type $(1,1)$. To achieve this, we first decompose the operator $T$ into two parts, namely $T^{\text{con}}$ and $T^{\text{dis}}$, defined by

\begin{align*}
    T^{\text{con}} f(\lambda) =  \frac{ \what{f}_H(\zeta)}{\psi(\zeta)} \quad \text{for }\lambda \in \R, \quad \text{and }   \quad T^{\text{dis}} f(k)= \frac{\what{f}_B(k)}{\psi(k)} \quad \text{for }k \in i\Gamma_{l,n}.
\end{align*}
Since $|\what f_B(ik)|\leq \|f\|_{L^1(G)}$ for all $f\in L^1(G)_{l,n}$, we have 
\begin{align*}
    \sum_{k \in i\Gamma_{l,n}} |T^{\text{dis}} f(k)| \psi(k)^2\, |k| =  \sum_{k \in i\Gamma_{l,n}} |\what {f}_B(k)| \psi(k) |k| \leq \left( \sum_{k \in i\Gamma_{l,n}}   \psi(k) |k|\right) \|f\|_{L^1(G)}  .
\end{align*}
Next, we aim to establish the inequality: 
\begin{align}\label{eqn_w1,1}
    \vartheta_{l,n}\left\{ \lambda \in \R :  \frac{ |\what{f}_H(\lambda)|}{\psi(\lambda)} >\alpha  \right\}\lesssim\|\psi\|_{\tau,\infty} {\|f\|_{L^1(G)}}/{\alpha}
\end{align}
for $\alpha>0$ and $f\in L^1(G)_{l,n}$. Recall from \eqref{definition of phi sigma lamda m, n} that $|\phi^{l,n}_{\tau,\lambda}(x)| \leq \phi^{0,0}_{\tau^+,\lambda}(x)$. Using this, one obtains:
\begin{align*}
    |\what{f}_H(\lambda)|\leq \|f\|_{L^1(G)} \quad \text{for all }\lambda \in \R.
\end{align*}
Therefore, \eqref{eqn_w1,1} follows from the subsequent inequality:
\begin{align}\label{eqn_w1,1_f}
    \vartheta_{l,n}\left\{ \lambda \in \R :  \frac{ \|f\|_{L^1(G)}}{\psi(\lambda)} >\alpha  \right\}\lesssim \|\psi\|_{\tau,\infty} {\|f\|_{L^1(G)}}/{\alpha}
\end{align}
for all $\alpha >0$. We observe that it suffices to prove the inequality above for $\|f\|_{L^1(G)} =1$. By using the definition of the measure $ \vartheta_{l,n}$, we get
\begin{align}\label{eqn_cov_Msi}
     \vartheta_{l,n}\left\{ \lambda \in \R :   \psi(\lambda)<\frac{1}{\alpha}  \right\}=   \int\limits_{\{\lambda\in \R:  \psi(\lambda)<\frac{1}{\alpha}\}} \psi(\lambda)^2 \mu(\tau,\lambda) \,d\lambda,
\end{align}
which we can rewrite as follows:
\begin{align*}
    \int\limits_{\{\lambda\in \R:  \psi(\lambda)<\frac{1}{\alpha}\}} \psi(\lambda)^2 \mu(\tau,\lambda)  \,d\lambda=  \int\limits_{\{\lambda\in \R:  \psi(\lambda)<\frac{1}{\alpha}\}} \left( \int_0^{\psi(\lambda)^2} 
 d\xi \right) \mu(\tau,\lambda) \,d\lambda.
\end{align*}
Next, by interchanging the order of integration, we obtain:
\begin{align*}
     \int\limits_{\{\lambda\in \R:  \psi(\lambda)<\frac{1}{\alpha}\}} \left( \int_0^{\psi(\lambda)^2} 
 d\xi \right)  \mu(\tau,\lambda)  \,d\lambda = \int_0^{\frac{1}{\alpha^2}} \left( \int\limits_{\{\lambda\in \R:  \xi^{\frac{1}{2}}<\psi(\lambda)<\frac{1}{\alpha}\}} \mu(\tau,\lambda)  \,d\lambda \right) d\xi,  
\end{align*}
which by the change of variable $\xi\rightarrow \xi^2$ transforms into 
\begin{align}\label{eqn_cov_Msi}
    2 \int_0^{\frac{1}{\alpha}} \xi \left( \int\limits_{\{\lambda\in \R:  \xi<\psi(\lambda)<\frac{1}{\alpha}\}} \mu(\tau,\lambda)  \,d\lambda \right) \, d\xi \lesssim  \int_0^{\frac{1}{\alpha}} \xi \left( \int\limits_{\{\lambda\in \R:  \psi(\lambda)> \xi\}}\mu(\tau,\lambda)  \,d\lambda \right) \, d\xi.
\end{align}
     Since, 
     \begin{align*}
         \xi \left( \int\limits_{\{\lambda\in \R:  \psi(\lambda)> \xi\}} \mu(\tau,\lambda)  \,d\lambda \right)\leq \sup_{\xi>0 }\, \xi \left( \int\limits_{\{\lambda\in \R:  \psi(\lambda)> \xi\}} \mu(\tau,\lambda)  \,d\lambda \right)=\|\psi\|_{\tau,\infty},
     \end{align*}
     so by using the hypothesis and \eqref{eqn_cov_Msi}, we have finally
     \begin{align*}
          \vartheta_{l,n}\left\{ \lambda \in \R :   \psi(\lambda)<\frac{1}{\alpha}  \right\}\leq  C \frac{\|\psi\|_{\tau,\infty}}{\alpha} 
     \end{align*}
     for all $\alpha>0$, where the constant $C$ does not depend on $l,n$. This concludes the proof of \eqref{eqn_w1,1_f} and consequently we have \eqref{eqn_w1,1}. Thus we have shown that the operator $T$ is bounded from $L^1(G)_{l,n}$ to $L^{1,\infty}(\mathbb{R} \cup i\Gamma_{l,n}, \psi(\zeta)^2 d\vartheta_{l,n}(\zeta))$. Next, utilizing the Marcinkiewicz interpolation theorem with $p_1=1$, $p_2=2$, and $\frac{1}{p} = 1-\theta+\frac{\theta}{2}$ ($0<\theta<1$), we proceed to obtain
     \begin{align*}
          \left(\frac{1}{4\pi^2}\int_{\R} \left( \frac{|\what {f}_{H}(\lambda)|}{\psi(\lambda)} \right)^{p}  \psi(\lambda)^2 \mu(\tau,\lambda)\,  d\lambda+  \frac{1}{2\pi}\sum_{k\in i\Gamma_{l,n}} \left( \frac{|\what {f}_{B}(k)|}{\psi(k)} \right)^{p} \psi(k)^2 |k|\right)^{\frac{1}{p}} & \\
          &\hspace{-4.5cm}\lesssim_p \left(\|\psi\|_{\tau,\infty}+ \sum_{k \in i\Gamma_{l,n}}   \psi(k) |k|  \right)^{\frac{2-p}{p}} \|f\|_{L^p(G)},
     \end{align*}
     for all $f\in L^p(G)_{l,n}$.
\end{proof}
Next, we document the subsequent interpolation theorem from \cite{MR0482275} for future reference.

\begin{theorem} \label{interpolation} Let $d\mu_0(\zeta)= \omega_0(\zeta) d\mu'(\zeta),$ $d\mu_1(\zeta)= \omega_1(\zeta) d\mu'(\zeta),$ and write $L^p(\omega)=L^p(\omega\, d\mu')$ for any weight $\omega.$ Suppose that $0<p_0, p_1< \infty.$ Then 
$$(L^{p_0}(\omega_0), L^{p_1}(\omega_1))_{\theta, p}=L^p(\omega),$$ where $0<\theta<1, \, \frac{1}{p}= \frac{1-\theta}{p_0}+\frac{\theta}{p_1}$ and $\omega= \omega_0^{\frac{p(1-\theta)}{p_0}} \omega_1^{\frac{p\theta}{p_1}}.$
\end{theorem}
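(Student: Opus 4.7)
The plan is to prove this classical Stein--Weiss / Calder\'on real interpolation identity for weighted Lebesgue spaces via the K-method. Recall that the real interpolation norm is
\[
\|f\|_{(A_0, A_1)_{\theta, p}} = \left( \int_0^\infty \bigl(t^{-\theta} K(t, f; A_0, A_1)\bigr)^p \, \frac{dt}{t} \right)^{1/p},
\]
where $K(t, f; A_0, A_1) = \inf_{f = f_0 + f_1} \bigl(\|f_0\|_{A_0} + t\|f_1\|_{A_1}\bigr)$. The crucial algebraic input is the identity $\omega^{1/p} = \omega_0^{(1-\theta)/p_0} \omega_1^{\theta/p_1}$, which must re-emerge naturally from the evaluation of the K-functional.

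First I would reduce to a pair with only one weight. The multiplication map $f \mapsto f \, \omega_0^{1/p_0}$ is a simultaneous isometry from $L^{p_0}(\omega_0)$ onto $L^{p_0}(d\mu')$ and from $L^{p_1}(\omega_1)$ onto $L^{p_1}(v \, d\mu')$, where $v = \omega_1 \omega_0^{-p_1/p_0}$. A direct computation using the algebraic identity above shows that under this transplantation $L^p(\omega)$ maps isometrically onto $L^p(v^{p\theta/p_1}\, d\mu')$, so it suffices to prove $(L^{p_0}(d\mu'), L^{p_1}(v\, d\mu'))_{\theta, p} = L^p(v^{p\theta/p_1}\, d\mu')$.

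Next I would compute the K-functional of the reduced pair. In the subcase $p_0 = p_1$, a pointwise threshold decomposition of $f$ — allocating each value $f(\zeta)$ entirely to $f_0$ or to $f_1$ according to how $t$ compares with a power of $v(\zeta)$ — yields $K(t, f)$ in closed form, and integration against $t^{-\theta p - 1}\,dt$ recovers the weighted $L^p$ norm directly. For $p_0 \neq p_1$ I would combine Peetre's characterization of the K-functional for the unweighted pair $(L^{p_0}, L^{p_1})$ in terms of the decreasing rearrangement with a layer-cake decomposition adapted to $v$. The hard part will be treating the weight and the exponent mismatch simultaneously; the cleanest way around this is Holmstedt's reiteration theorem, which decouples the two effects via an intermediate unweighted space. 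As this is the classical result of Bergh--L\"ofstr\"om, I would ultimately refer to \cite{MR0482275} for the full technical verification.
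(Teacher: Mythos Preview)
The paper does not prove this theorem at all: it is simply quoted from \cite{MR0482275} (Bergh--L\"ofstr\"om) as a known result to be used later. Your proposal therefore goes well beyond the paper, which offers no argument whatsoever.

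The sketch you give is a reasonable and standard route to the result. The reduction by the multiplication isometry $f\mapsto f\,\omega_0^{1/p_0}$ is exactly how one strips off one weight, and the pointwise threshold decomposition in the equal-exponent case is the classical Stein--Weiss argument. For $p_0\neq p_1$ your plan is a bit vague --- invoking rearrangements together with Holmstedt's theorem is one way, though the usual textbook proof (as in \cite{MR0482275}, Theorem~5.5.1) handles the general case more directly by computing a two-sided estimate on the $K$-functional via a level-set splitting adapted to the weight $v$, without needing reiteration. Since you ultimately defer to the same reference the paper cites, there is no discrepancy in substance; you have simply supplied the outline the paper omits.
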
 

The following corollary is immediate.

\begin{corollary}\label{interpolationoperator} Let $d\mu_0(\zeta)= \omega_0(\zeta) d\mu'(\zeta),$ $d\mu_1(\zeta)= \omega_1(\zeta) d\mu'(\zeta).$ Suppose that $0<p_0, p_1< \infty.$  If a continuous linear operator $A$ admits bounded extensions, $A: L^p(Y,\mu)\rightarrow L^{p_0}(\omega_0) $ and $A: L^p(Y,\mu)\rightarrow L^{p_1}(\omega_1) ,$   then, we there exists a bounded extension $A: L^p(Y,\mu)\rightarrow L^{b}(\omega) $ of $A$, where  $0<\theta<1, \, \frac{1}{b}= \frac{1-\theta}{p_0}+\frac{\theta}{p_1}$ and 
 $\omega= \omega_0^{\frac{b(1-\theta)}{p_0}} \omega_1^{\frac{b\theta}{p_1}}.$
\end{corollary}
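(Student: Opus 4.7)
The plan is to combine Theorem~\ref{interpolation} with the standard real interpolation principle that a bounded linear map from a fixed Banach space into each endpoint of a compatible couple extends boundedly to every real interpolation space in between. Concretely, I would set $Y_0 = L^{p_0}(\omega_0)$ and $Y_1 = L^{p_1}(\omega_1)$, regarded as Banach spaces of $d\mu'$-measurable functions on $\zeta$. By hypothesis, $A$ maps $L^p(Y,\mu)$ into $Y_0 \cap Y_1$. The embedding $Y_0\cap Y_1 \hookrightarrow (Y_0, Y_1)_{\theta,b}$ then lands the image of $A$ inside the real interpolation space, and Theorem~\ref{interpolation} identifies $(Y_0, Y_1)_{\theta,b}$ with $L^{b}(\omega)$ for the prescribed choice of $b$ and $\omega$, which is the conclusion sought.

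The one concrete estimate to record is the logarithmic-convexity inequality
\[
\|y\|_{(Y_0, Y_1)_{\theta, b}} \leq C_{\theta, b}\, \|y\|_{Y_0}^{\,1-\theta}\, \|y\|_{Y_1}^{\,\theta}, \qquad y \in Y_0 \cap Y_1.
\]
This follows from bounding the $K$-functional by $K(t,y) \leq \min(\|y\|_{Y_0},\, t\|y\|_{Y_1})$ (take $y=y+0$ and $y=0+y$ in its definition) and then splitting the defining integral $\int_0^\infty (t^{-\theta} K(t,y))^{b}\, dt/t$ at the crossover point $t_\ast = \|y\|_{Y_0}/\|y\|_{Y_1}$; both resulting integrals are elementary power integrals and yield a constant of the form $C_{\theta,b} = \bigl(\tfrac{1}{(1-\theta)b} + \tfrac{1}{\theta b}\bigr)^{1/b}$.

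Applying this with $y = Af$ for $f \in L^p(Y,\mu)$, and setting $M_j := \|A\|_{L^p(Y,\mu) \to L^{p_j}(\omega_j)}$ for $j=0,1$, gives
\[
\|Af\|_{(Y_0, Y_1)_{\theta, b}} \leq C_{\theta, b}\, M_0^{\,1-\theta}\, M_1^{\,\theta}\, \|f\|_{L^p(Y,\mu)},
\]
after which Theorem~\ref{interpolation}, with the precise choices $\frac{1}{b} = \frac{1-\theta}{p_0}+\frac{\theta}{p_1}$ and $\omega = \omega_0^{b(1-\theta)/p_0}\,\omega_1^{b\theta/p_1}$, replaces the left-hand side by (a constant multiple of) $\|Af\|_{L^{b}(\omega)}$. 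There is no serious obstacle here; the corollary is essentially a packaging of Theorem~\ref{interpolation}, and the only point that requires verification is the elementary embedding $Y_0 \cap Y_1 \hookrightarrow (Y_0,Y_1)_{\theta,b}$ with the norm bound above. The bounded extension to all of $L^p(Y,\mu)$ is then obtained by density, using continuity of $A$ and completeness of $L^b(\omega)$.
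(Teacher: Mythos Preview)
Your proposal is correct and follows exactly the line the paper intends: the paper simply declares the corollary ``immediate'' from Theorem~\ref{interpolation} without writing out any argument, and your proof supplies precisely the standard justification---the embedding $Y_0\cap Y_1\hookrightarrow (Y_0,Y_1)_{\theta,b}$ via the $K$-functional, followed by the identification $(Y_0,Y_1)_{\theta,b}=L^b(\omega)$ from Theorem~\ref{interpolation}. There is nothing to add.
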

Now, we employ the preceding theorem to establish a Hausdorff–Young–Paley inequality through interpolation between the Hausdorff–Young inequality and the Paley inequality.
\begin{theorem}\label{thm_H_Y_P}
    Let $1<p\leq 2$ and $1<p\leq b \leq p'<\infty $. Assume that $\psi: \R \cup i\Gamma_{l,n} \rightarrow (0,\infty)$ is a positive function such that
    \begin{align}{\label{weak_inf}}
         \|\psi\|_{\tau,\infty} = \sup_{\alpha>0} \alpha  \int\limits_{\substack{\{\lambda \in \R : \psi(\lambda)>\alpha\} }} \mu(\tau,\lambda) \,d\lambda 
    \end{align}
    is finite. Then for all $f \in L^p(G)_{l,n}$, we have 
     \begin{align*}
          \left(\frac{1}{4\pi^2}\int_{\R} \left( {|\what {f}_{H}(\lambda)|}{\psi(\lambda)}^{\frac{1}{b}-\frac{1}{p'}} \right)^{b}     \mu(\tau,\lambda)\,  d\lambda+  \frac{1}{2\pi}\sum_{k\in i\Gamma_{l,n}} \left(  {|\what {f}_{B}(k)|}{\psi(k)}^{\frac{1}{b}-\frac{1}{p'}} \right)^{b} |k|\right)^{\frac{1}{b}} & \\
          &\hspace{-5cm}\lesssim_p \left( \|\psi\|_{\tau,\infty}+  \sum_{k \in i\Gamma_{l,n}}   \psi(k) |k|  \right)^{\frac{1}{b}-\frac{1}{p'}} \|f\|_{L^p(G)}.
     \end{align*}
\end{theorem}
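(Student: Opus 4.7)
\textbf{Proof plan for Theorem \ref{thm_H_Y_P}.} The statement is designed to be exactly the real interpolation of the two inequalities just established, namely the Hausdorff--Young inequality (Theorem \ref{thm_hy}) and the Paley inequality (Theorem \ref{thm_paley}), viewed as mapping properties of the single operator $\mathcal{F}_{l,n}$ defined in \eqref{def_F}. Accordingly, my strategy is to set up both as bounds of $\mathcal{F}_{l,n}$ from $L^p(G)_{l,n}$ into two different weighted Lebesgue spaces on the measure space $(\R\cup i\Gamma_{l,n},d\tilde\nu_{l,n})$, and then apply Corollary \ref{interpolationoperator}.

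First I would take $d\mu' = d\tilde\nu_{l,n}$ and identify the two endpoints. The Paley inequality (Theorem \ref{thm_paley}) can be rewritten as
\begin{equation*}
\mathcal{F}_{l,n}\colon L^p(G)_{l,n}\longrightarrow L^{p_0}(\omega_0\,d\tilde\nu_{l,n}),\qquad p_0=p,\quad \omega_0=\psi^{2-p},
\end{equation*}
with operator norm $M_0\lesssim (\|\psi\|_{\tau,\infty}+\sum_{k\in i\Gamma_{l,n}}\psi(k)|k|)^{(2-p)/p}$. The Hausdorff--Young inequality (Theorem \ref{thm_hy}) reads
\begin{equation*}
\mathcal{F}_{l,n}\colon L^p(G)_{l,n}\longrightarrow L^{p_1}(\omega_1\,d\tilde\nu_{l,n}),\qquad p_1=p',\quad \omega_1\equiv 1,
\end{equation*}
with norm $M_1\leq C_p$. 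Both endpoints have $L^p(G)_{l,n}$ as the common domain, which is exactly the setting of Corollary \ref{interpolationoperator}.

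Next I would apply Corollary \ref{interpolationoperator} with interpolation parameter $\theta\in(0,1)$ chosen so that $\tfrac1b=\tfrac{1-\theta}{p}+\tfrac{\theta}{p'}$; this is possible for every $b\in[p,p']$, which is the range in the hypothesis. The corollary gives the bounded extension $\mathcal{F}_{l,n}\colon L^p(G)_{l,n}\to L^b(\omega\,d\tilde\nu_{l,n})$ with interpolated weight
\begin{equation*}
\omega=\omega_0^{b(1-\theta)/p_0}\omega_1^{b\theta/p_1}=\psi^{(2-p)b(1-\theta)/p}
\end{equation*}
and interpolated norm at most $M_0^{1-\theta}M_1^{\theta}$. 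The only computation that needs to be checked is the exponent of $\psi$. Using $\tfrac1b-\tfrac1{p'}=\tfrac{1-\theta}{p}-\tfrac{1-\theta}{p'}=(1-\theta)(\tfrac1p-\tfrac1{p'})=(1-\theta)\tfrac{2-p}{p}$, one gets $(2-p)b(1-\theta)/p=b(\tfrac1b-\tfrac1{p'})$, so $\omega(\zeta)=\psi(\zeta)^{b(1/b-1/p')}$, matching the weight inside the stated inequality. The same identity shows $M_0^{1-\theta}=(\|\psi\|_{\tau,\infty}+\sum_{k\in i\Gamma_{l,n}}\psi(k)|k|)^{1/b-1/p'}$, while $M_1^{\theta}$ is absorbed in the implicit constant depending on $p$.

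The substance of the argument is the preceding two theorems; the remaining work is bookkeeping on the exponents. The only mild obstacle I anticipate is making sure that the Stein--Weiss style interpolation of Corollary \ref{interpolationoperator} is applicable on the combined continuous/discrete measure space $(\R\cup i\Gamma_{l,n},d\tilde\nu_{l,n})$, but since the discrete part $i\Gamma_{l,n}$ is finite and carries an atomic measure with finite total mass (so the weighted $L^{p_0}$ and $L^{p_1}$ spaces are standard Banach spaces over the same underlying measure space), the abstract interpolation theorem applies verbatim. Splitting, if needed, the continuous and discrete contributions on both sides and then recombining yields exactly the displayed inequality, concluding the proof.
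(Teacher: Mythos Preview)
Your proposal is correct and follows essentially the same approach as the paper: both set up $\mathcal{F}_{l,n}$ as bounded from $L^p(G)_{l,n}$ into $L^p(\psi^{2-p}\,d\tilde\nu_{l,n})$ (via Theorem~\ref{thm_paley}) and into $L^{p'}(d\tilde\nu_{l,n})$ (via Theorem~\ref{thm_hy}), and then apply Corollary~\ref{interpolationoperator} with $p_0=p$, $p_1=p'$ to obtain the weight $\omega=\psi^{b(1/b-1/p')}$. Your exponent bookkeeping matches the paper's computation $\theta=\frac{b-p}{(2-p)b}$, $\omega=\psi^{1-b/p'}$ exactly.
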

\begin{proof}
 Assuming that the function $\psi$ satisfies the condition \eqref{weak_inf}, and recalling the operator $\mathcal{F}_{l,n}$ from \eqref{def_F}, we can utilize Theorem \ref{thm_paley} and Theorem \ref{thm_hy}. Consequently, we conclude that $\mathcal{F}_{l,n}$ is a bounded operator from $L^p(G)_{l,n}$ to $L^{p}(\R \cup i\Gamma_{l,n}, \psi^{2-p}d{\tilde{\nu}}_{l,n})$ and from $L^p(G)_{l,n}$ to $L^{p'}\left(\R \cup i\Gamma_{l,n}, d{\tilde{\nu}}_{l,n}) \right)$. Therefore, applying Corollary \ref{interpolationoperator} with $p_0=p$, $p_1=p'$,
      \begin{align*}
        d\mu_0(\zeta) = \psi(\zeta)^{2-p} d{\tilde{\nu}}_{l,n}(\zeta), \quad \text{and} \quad d\mu_1(\zeta) =  d{\tilde{\nu}}_{l,n}(\zeta),
    \end{align*} 
    we obtain that $\mathcal{F}_{l,n}$ is a bounded operator from $L^p(G)_{l,n}$ to $L^{b}\left(\R \cup i\Gamma_{l,n}, \omega(\zeta) d{\tilde{\nu}}_{l,n}(\zeta)\right)$, where
\begin{align*}
\frac{1}{b}= \frac{1-\theta}{p}+\frac{\theta}{p'} \quad \text{and} \quad \omega(\zeta) = \psi(\zeta)^{(2-p)\frac{b(1-\theta)}{p}},
\end{align*}
for $0<\theta<1$. Solving the expression above, we get $\theta= \frac{b-p}{(2-p)b}$, which implies
\begin{align*}
\omega(\zeta) = \psi(\zeta)^{1-\frac{b}{p'}}.
\end{align*}
Finally, together with the expression above of $\omega(\zeta)$ and the fact  $\mathcal{F}_{l,n}$ is a bounded operator from $L^p(G)_{l,n}$ to $L^{b}\left(\R \cup i\Gamma_{l,n}, \omega(\zeta) d{\tilde{\nu}}_{l,n}(\zeta)\right)$, the theorem above follows.
\end{proof}

\section{Boundedness of Fourier and spectral multipliers on $ \mathrm{SL}(2,\R) $}\label{main_result}

In this section, we will present our main results regarding the multiplier operators for $(l,n)$ type functions on $G$. Before delving into that, let us define the multiplier operator in our setting and briefly recall the $L^p$-bounded result for $(n,n)$-type multipliers by Ricci and Wróbel \cite{Ricci}.

Let $l,n \in \Z^{\tau}$, for some $\tau \in \{\tau_+, \tau_- \}$, and $m: \mathbb{R}\cup i\Gamma_{l,n} \rightarrow \C$ be a bounded measurable function. The associated Fourier multiplier operator $T_m$ for $(l,n)$ type functions on $G$ is defined through the inversion formula \eqref{inversion_l,n} as follows:
\begin{align}\label{mult_(l,n)}
T_m f(x) &= \frac{1}{4\pi^2} \int_{\R} m(\lambda) \what {f_H}(\lambda) \phi_{\tau, \lambda}^{n,l} (x) \mu(\tau,\lambda) d\lambda
+\frac{1}{2\pi}\sum_{k\in i \Gamma_{l,n}} m(k) \what{ f_B}(k) \psi_{k}^{n,l}(x) |k|.
\end{align}
In their work \cite{Ricci}, the authors tackled the $L^p(G)_{n,n}$-boundedness problem for $p\in (1,\infty) \setminus {2}$. They observed that if $T_m$ is bounded on $L^p(G)_{n,n}$ for some $p\in (1,\infty) \setminus {2}$, then the multiplier $m$, initially defined on $\R \times i\Gamma_{n,n}$, must extend to a bounded even holomorphic function in the interior of $S_{p} := \{ \lambda \in \C : |\Im \lambda| \leq |2/p-1|\}$. We have proven the $L^p(G)_{l,n}$-$L^q(G)_{l,n}$ estimates for $T_m$ with $1<p\leq 2\leq q<\infty$ and any $l,n \in \Z^{\tau}$ without assuming any regularity on the multiplier. For the sake of convenience of the reader, we recall one of our main results from Theorem \ref{lp-lq-mutli_int}.
\begin{theorem} \label{lp-lq-mutli}
    Let $1<p\leq 2 \leq q<\infty$ and $l,n \in \Z^{\tau}$. Suppose that $T_m$ is the multiplier operator defined in \eqref{mult_(l,n)} with the multiplier $m$. Then there exists a constant $C=C({p,q,l,n})> 0$, such that the following holds
    \begin{align*}
        \|T_m \|_{L^p(G)_{l,n} \rightarrow L^q(G)_{l,n}} \leq C \sup_{\alpha>0} \alpha \left( \int\limits_{\{  \lambda \in \R : |m(\lambda)|> \alpha\}}  \mu(\tau,\lambda) d\lambda \right)^{\frac{1}{p}-\frac{1}{q}} + C \left(\sum_{k \in i\Gamma_{l,n}}   |m(k)| |k| ^{\frac{1}{p}-\frac{1}{q}}\right) .
    \end{align*}
\end{theorem}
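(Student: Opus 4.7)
The strategy is to combine the inverse Hausdorff--Young inequality \eqref{inv_Hausd} with the Hausdorff--Young--Paley inequality (Theorem \ref{thm_H_Y_P}) through a suitable choice of weight. Since $T_m$ is defined via the inversion formula \eqref{inversion_l,n}, one has $\mathcal F_{l,n}(T_m f)=m\cdot \mathcal F_{l,n} f$ for all $f\in C_c^{\infty}(G)_{l,n}$. As $q\geq 2$, the conjugate exponent $q'$ lies in $(1,2]$, so applying \eqref{inv_Hausd} with $p=q'$ gives
\begin{align*}
\|T_m f\|_{L^q(G)} \lesssim \left( \int_{\R\cup i\Gamma_{l,n}} |m(\zeta)|^{q'} |\mathcal F_{l,n}f(\zeta)|^{q'}\, d\tilde\nu_{l,n}(\zeta)\right)^{1/q'}.
\end{align*}

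The next step is to apply Theorem \ref{thm_H_Y_P} with $b=q'$ and $\psi(\zeta):=|m(\zeta)|^{1/(1/p-1/q)}$. This choice is engineered so that $1/b-1/p'=1/p-1/q$ and $\psi^{b(1/b-1/p')}=|m|^{q'}$, which would yield
\begin{align*}
\left(\int |m|^{q'}|\mathcal F_{l,n}f|^{q'}\, d\tilde\nu_{l,n}\right)^{1/q'} \lesssim \left(\|\psi\|_{\tau,\infty}+\sum_{k\in i\Gamma_{l,n}}\psi(k)|k|\right)^{1/p-1/q}\|f\|_{L^p(G)}.
\end{align*}
The range hypothesis $p\leq b\leq p'$ of Theorem \ref{thm_H_Y_P} is automatic on the upper side (since $b=q'\leq 2\leq p'$) but requires $q\leq p'$ on the lower side. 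The complementary case $q>p'$ is handled by duality: $T_m^{*}=T_{\bar m}$ with respect to the $L^2$ pairing (by Plancherel), so $T_m\colon L^p(G)_{l,n}\to L^q(G)_{l,n}$ is bounded iff $T_{\bar m}\colon L^{q'}(G)_{l,n}\to L^{p'}(G)_{l,n}$ is bounded with the same operator norm; the new pair $(q',p')$ satisfies $p'\leq q=(q')'$ and thus lands back in the previous case, while $|\bar m|=|m|$ and $1/q'-1/p'=1/p-1/q$ render the bound in the statement symmetric under this dualization.

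The last step is to rewrite the constants in the form of the theorem. The substitution $\beta=\alpha^{1/p-1/q}$ in the definition of $\|\psi\|_{\tau,\infty}$ identifies
\begin{align*}
\|\psi\|_{\tau,\infty}^{1/p-1/q}=\sup_{\beta>0}\beta\left(\int_{\{|m(\lambda)|>\beta\}}\mu(\tau,\lambda)\,d\lambda\right)^{1/p-1/q},
\end{align*}
which is the first term on the right of the statement. For the discrete sum, writing $r=1/p-1/q\in(0,1]$, the elementary subadditivity $(\sum_k a_k)^r\leq\sum_k a_k^r$ applied to $a_k=|m(k)|^{1/r}|k|$ yields
\begin{align*}
\left(\sum_{k\in i\Gamma_{l,n}} |m(k)|^{1/r}|k|\right)^r \leq \sum_{k\in i\Gamma_{l,n}} |m(k)||k|^r,
\end{align*}
matching the second term; the combination is completed by the pointwise bound $(a+b)^r\leq a^r+b^r$ for $r\in(0,1]$ and an extension from $C_c^{\infty}(G)_{l,n}$ to $L^p(G)_{l,n}$ by density. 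The main technical obstacle I anticipate is precisely the range restriction $b=q'\in[p,p']$ in Theorem \ref{thm_H_Y_P}, which forces the duality reduction for $q>p'$; the arithmetic identification of the constants and the subadditivity manipulations are routine once the interpolation step is in place.
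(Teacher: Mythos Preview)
Your proposal is correct and follows essentially the same route as the paper: reduce by duality to the case $q\le p'$, apply the inverse Hausdorff--Young inequality \eqref{inv_Hausd}, then invoke Theorem~\ref{thm_H_Y_P} with $b=q'$ and weight $\psi=|m|^{1/(1/p-1/q)}$, and finally rewrite the resulting constants via the substitution in the supremum and subadditivity of $x\mapsto x^{1/p-1/q}$ on the finite discrete sum. The only cosmetic difference is that the paper uses the letter $r$ for the reciprocal $(1/p-1/q)^{-1}$ rather than for $1/p-1/q$ itself.
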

\begin{proof}
We first observe that, by duality, it suffices to prove the result for $p\leq q'$. Specifically, if $q'\leq p$, then the $L^p$-$L^q$ boundedness of $T_m$ implies the $L^{q'}$-$L^{p'}$ boundedness of $T_m^*= T_{\overline{m}}$. Moreover, $\| T_m\|_{L^p\rightarrow L^q} = \| T_m^* \|_{L^{p'}\rightarrow L^{q'}}$. Let us assume that $p\leq q'$. As $q\geq 2$, we can apply \eqref{inv_Hausd} to obtain
    \begin{align*}
        \|T_m f \|_{L^q(G)_{l,n}} \lesssim \left( \int\limits_{\R \cup i\Gamma_{l,n}} |\mathcal{F}_{l,n} (T_m f)(\zeta)|^{q'} d{\tilde{\nu}}_{l,n}(\zeta)  \right)^{\frac{1}{q'}}.  
    \end{align*}
Now by applying Theorem \ref{thm_H_Y_P} with $\psi =|m|^{r}$, $b= q'$ and letting $1/r =1/p-1/q$, it follows that
    \begin{equation}\label{ineq_Fl,nT}
        \begin{aligned}
         \left( \int\limits_{\R \cup i\Gamma_{l,n}} |\mathcal{F}_{l,n} (T_m f)(\zeta)|^{q'} d{\tilde{\nu}}_{l,n}(\zeta)  \right)^{\frac{1}{q'}}  &\lesssim \left( \left(\sup\limits_{\alpha>0}\,\alpha \int\limits_{\{ \lambda \in \R : |m(\lambda)|^r>\alpha \}   } \mu(\tau,\lambda) d\lambda\right)^{\frac{1}{r}} \right. \\&\hspace{2.5cm} \left. + \left( \sum_{k \in i\Gamma_{l,n}}   |m(k)|^{r} |k|\right)^{\frac{1}{r}}  
         \right) \|f\|_{L^p(G)_{l,n}},
    \end{aligned}
    \end{equation}
    for all $f \in L^p(G)_{l,n}$. Moreover, we can write 
    \begin{align*}
        \left(\sup\limits_{\alpha>0}\,\alpha \int\limits_{\{ \lambda \in \R : |m(\lambda)|^r>\alpha \}   } \mu(\tau,\lambda) d\lambda\right)^{\frac{1}{r}} &= \left(\sup\limits_{\alpha>0}\,\alpha \int\limits_{\{ \lambda \in \R : |m(\lambda)|>\alpha^{\frac{1}{r}} \}   } \mu(\tau,\lambda) d\lambda\right)^{\frac{1}{r}} \\
        & = \left(\sup\limits_{\alpha>0}\,\alpha^r \int\limits_{\{ \lambda \in \R : |m(\lambda)|>\alpha \}   } \mu(\tau,\lambda) d\lambda\right)^{\frac{1}{r}}\\
        &= \sup\limits_{\alpha>0}\,\alpha \left( \int\limits_{\{ \lambda \in \R : |m(\lambda)|>\alpha \}   } \mu(\tau,\lambda) d\lambda\right)^{\frac{1}{r}}.
    \end{align*}
    Therefore for $1<p\leq 2\leq q <\infty$, utilizing the above formula and taking the power $\frac{1}{r}$ inside the sum in \eqref{ineq_Fl,nT}, we obtain that
    \begin{align*}
        \|T_m f \|_{L^q(G)_{l,n}} \lesssim \left( \sup\limits_{\alpha>0}\,\alpha \left( \int\limits_{\{ \lambda \in \R : |m(\lambda)|>\alpha \}   } \mu(\tau,\lambda) d\lambda\right)^{\frac{1}{p}-\frac{1}{q}}+ \left(\sum_{k \in i\Gamma_{l,n}}   |m(k)||k|^{\frac{1}{p}-\frac{1}{q}} \right)\right) \|f\|_{L^p(G)_{l,n}}
    \end{align*}
    for all $f \in L^p(G)_{l,n}$, proving the theorem.
\end{proof}
\begin{theorem}\label{thm_spec_mult}    Let $1<p\leq 2\leq q<\infty$. Suppose $\varphi: \R \rightarrow \C$ is a function such that $|\varphi|$ decreases monotonically and continuously on $[\frac{1}{4}, \infty)$, with $\lim_{s\rightarrow \infty} |\varphi(s)| = 0$. Then the spectral multiplier operator $\varphi(\Omega)$  defined by
   \begin{align}\label{def_spec_m}
 	\varphi(\Omega) f(x) &= \frac{1}{4\pi^2} \int_{\R} \varphi\left(\frac{1+\lambda^2}{4}\right) \what {f_H}(\lambda) \phi_{\tau,\lambda}^{l,n} (x) \mu(\tau,\lambda) d\lambda 
 	+\frac{1}{2\pi}\sum_{k\in i\Gamma_{l,n}} {\varphi\left(\frac{1+k^2}{4}\right)} \what{ f_B}(k) \psi_{k}^{l,n}(x) |k|
 \end{align}
   satisfies the following estimates
   \begin{enumerate}
       \item\label{est_op_+} For $\tau = \tau_+$, we have 
   \begin{align*}
       \|\varphi(\Omega)\|_{L^p(G)_{l,n} \rightarrow L^q(G)_{l,n}  } \lesssim \left( \sum_{k\in i\Gamma_{l,n}} \left| \varphi\left(\frac{1+k^2}{4}\right) \right| |k|^{\frac{1}{p}-\frac{1}{q}}  \right) + \max \begin{cases}
           \sup\limits_{\frac{1}{4} < s \leq  \frac{1}{2}} |\varphi(s)| (s -\frac{1}{4})^{\frac{3}{2} (\frac{1}{p}- \frac{1}{q})},\\
            \sup\limits_{s \geq  \frac{1}{2}} |\varphi(s)| (s -\frac{1}{4})^{ (\frac{1}{p}- \frac{1}{q})}.
       \end{cases} 
   \end{align*}
   where we recall that $ \|\varphi(\Omega)\|_{L^p(G)_{l,n} \rightarrow L^q(G)_{l,n}  }$ denotes the operator norm of $\varphi(\Omega)$ from $L^p(G)_{l,n}$ to $L^q (G)_{l,n}$.
   \item\label{est_op_-}  When $\tau=\tau_-$, we have
   \begin{align*}
       \|\varphi(\Omega)\|_{L^p(G)_{l,n} \rightarrow L^q(G)_{l,n}  } \lesssim \left( \sum_{k\in i\Gamma_{l,n}} \left| \varphi\left(\frac{1+k^2}{4}\right) \right| |k|^{\frac{1}{p}-\frac{1}{q}} \right)   + \max \begin{cases}
           \sup\limits_{\frac{1}{4} < s \leq  \frac{1}{2}} |\varphi(s)| (s -\frac{1}{4})^{\frac{1}{2} (\frac{1}{p}- \frac{1}{q})},\\
            \sup\limits_{s \geq  \frac{1}{2}} |\varphi(s)| (s -\frac{1}{4})^{ (\frac{1}{p}- \frac{1}{q})}.
       \end{cases}
   \end{align*}
   \end{enumerate}
\end{theorem}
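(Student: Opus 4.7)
The plan is to recognize $\varphi(\Omega)$ as a particular instance of the Fourier multiplier operator $T_m$ defined in \eqref{mult_(l,n)}. Indeed, comparing \eqref{def_spec_m} with \eqref{mult_(l,n)}, $\varphi(\Omega)$ is precisely $T_m$ with symbol
\begin{equation*}
    m(\zeta) = \varphi\!\left(\tfrac{1+\zeta^2}{4}\right), \qquad \zeta \in \R \cup i\Gamma_{l,n},
\end{equation*}
which is consistent with the eigenvalue equation \eqref{ef_phi_n,n}. A direct application of Theorem \ref{lp-lq-mutli} then produces the bound
\begin{equation*}
    \|\varphi(\Omega)\|_{L^p(G)_{l,n}\to L^q(G)_{l,n}} \lesssim \sup_{\alpha>0}\alpha\left(\int\limits_{\{\lambda\in\R\,:\,|\varphi((1+\lambda^2)/4)|>\alpha\}} \mu(\tau,\lambda)\,d\lambda\right)^{\frac{1}{p}-\frac{1}{q}} + \sum_{k\in i\Gamma_{l,n}} \left|\varphi\!\left(\tfrac{1+k^2}{4}\right)\right| |k|^{\frac{1}{p}-\frac{1}{q}}.
\end{equation*}
The discrete summation already matches the second term in the conclusion, so the task reduces to estimating the supremum.

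To do so, I would perform the change of variables $s=(1+\lambda^2)/4$, which bijects $[0,\infty)_{\lambda}$ onto $[1/4,\infty)_s$. Since $|\varphi|$ is monotonically decreasing and continuous on $[1/4,\infty)$ with $|\varphi(s)|\to 0$ as $s\to\infty$, for each $\alpha\in (0,|\varphi(1/4)|)$ there is a unique $s_\alpha\in (1/4,\infty)$ with $|\varphi(s_\alpha)|=\alpha$, and the level set is the interval
\begin{equation*}
    \{\lambda\in\R : |\varphi((1+\lambda^2)/4)|>\alpha\} = [-\lambda_\alpha,\lambda_\alpha], \qquad \lambda_\alpha := 2\sqrt{s_\alpha-\tfrac{1}{4}}.
\end{equation*}

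The next step is the asymptotic analysis of $I(\tau,\lambda_\alpha):=\int_{-\lambda_\alpha}^{\lambda_\alpha}\mu(\tau,\lambda)\,d\lambda$. Using $\tanh(\lambda\pi/2)\sim \lambda\pi/2$ and $\coth(\lambda\pi/2)\sim 2/(\lambda\pi)$ as $\lambda\to 0$, together with $\tanh,\coth\to 1$ as $|\lambda|\to\infty$, one checks
\begin{equation*}
    I(\tau_+,\lambda_\alpha) \asymp
    \begin{cases}\lambda_\alpha^{3} & \text{if } \lambda_\alpha\le 1,\\ \lambda_\alpha^{2} & \text{if } \lambda_\alpha\ge 1,\end{cases}
    \qquad
    I(\tau_-,\lambda_\alpha) \asymp
    \begin{cases}\lambda_\alpha & \text{if } \lambda_\alpha\le 1,\\ \lambda_\alpha^{2} & \text{if } \lambda_\alpha\ge 1.\end{cases}
\end{equation*}
Translating back via $\lambda_\alpha^2 = 4(s_\alpha-1/4)$, the regime $\lambda_\alpha\le 1$ corresponds to $s_\alpha\in (1/4,1/2]$ and $\lambda_\alpha\ge 1$ to $s_\alpha\ge 1/2$. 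Substituting these bounds into $\alpha\cdot I(\tau,\lambda_\alpha)^{1/p-1/q}=|\varphi(s_\alpha)|\cdot I(\tau,\lambda_\alpha)^{1/p-1/q}$ and taking the supremum over $\alpha$ (equivalently, over $s_\alpha$) yields the two branches displayed in statements \eqref{est_op_+} and \eqref{est_op_-} of the theorem.

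The main technical point to be careful about is the asymmetry between $\tau_+$ and $\tau_-$ near $\lambda=0$: the Plancherel density $\mu(\tau_+,\lambda)$ vanishes quadratically at the origin (because $\tanh(\lambda\pi/2)\sim \lambda\pi/2$), whereas $\mu(\tau_-,\lambda)$ stays bounded (because $\coth(\lambda\pi/2)$ has a compensating pole). This is precisely what produces the different exponents $3/2(\frac{1}{p}-\frac{1}{q})$ versus $\frac{1}{2}(\frac{1}{p}-\frac{1}{q})$ in the small-$s$ regime of the two cases, while in the large-$s$ regime the two densities behave identically (giving the common exponent $\frac{1}{p}-\frac{1}{q}$). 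Taking the maximum over the two subregimes in $s_\alpha$ and combining with the discrete contribution completes the argument.
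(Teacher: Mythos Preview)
Your proposal is correct and follows essentially the same route as the paper's proof: identify $\varphi(\Omega)=T_m$ with $m(\zeta)=\varphi((1+\zeta^2)/4)$, apply Theorem~\ref{lp-lq-mutli}, use the monotonicity and continuity of $|\varphi|$ to rewrite the level sets as symmetric intervals $|\lambda|<2\sqrt{s-1/4}$, and then exploit the local asymptotics of $\mu(\tau_\pm,\lambda)$ at the origin and at infinity to split into the regimes $s\in(1/4,1/2]$ and $s\ge 1/2$. The paper carries out exactly this computation, with the same explanation for why the $\tau_+$ and $\tau_-$ cases differ only in the small-$s$ exponent.
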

\begin{proof}
    Comparing the definition of $\varphi(\Omega)$ with \eqref{mult_(l,n)},  we observe that $\varphi(\Omega)$ is a Fourier multiplier operator $T_m$ with symbol
    $$m(\zeta)= \varphi \left(\frac{1+\zeta^2}{4}\right) \quad \text{for all } \zeta \in \R \cup i\Gamma_{l,n}.$$
    Thus, applying Theorem \ref{lp-lq-mutli}, we get
    \begin{equation}
    \begin{aligned}\label{est_varom}
        \|\varphi(\Omega)\|_{L^p(G)_{l,n} \rightarrow L^q(G)_{l,n}}&\\ &\hspace{-2cm}\lesssim \left( \sum_{k \in i\Gamma_{l,n}}   \left|\varphi \left(\frac{1+k^2} {4} \right)\right||k|^{\frac{1}{p}-\frac{1}{q}} \right)+ \sup_{\alpha>0}  \alpha \left( \int\limits_{\{  \lambda \in \R : \left| \varphi\left(\frac{1+\lambda^2}{4}\right) \right|> \alpha\}} \mu(\tau,\lambda) d\lambda \right)^{\frac{1}{p}-\frac{1}{q}}. 
    \end{aligned}
    \end{equation}
Given that $\varphi$ is continuously decreasing on the interval $[\frac{1}{4}, \infty)$, it follows that
    \begin{align*}
        \sup_{\alpha>0}  \alpha \left( \int\limits_{\{  \lambda \in \R : \left|\varphi\left(\frac{1+\lambda^2}{4}\right) \right|> \alpha\}} \mu(\tau,\lambda) d\lambda \right)^{\frac{1}{p}-\frac{1}{q}}=  \sup_{0<\alpha < \left|\varphi(\frac{1}{4})\right|}  \alpha \left( \int\limits_{\{  \lambda \in \R : \left|\varphi\left(\frac{1+\lambda^2}{4}\right)\right|> \alpha\}} \mu(\tau,\lambda) d\lambda \right)^{\frac{1}{p}-\frac{1}{q}}.
    \end{align*}   
     For $\alpha \in (0,|\varphi(\frac{1}{4})|)$, using the hypotheses on $\varphi$, we can express $\alpha$ as $\alpha =|\varphi(s)|$ for a certain $s \in (\frac{1}{4},\infty)$. Then, utilizing the monotonicity of $|\varphi|$, we derive
    \begin{align*}
         \sup_{\alpha>0}  \alpha \left( \int\limits_{\{  \lambda \in \R : \left| \varphi\left(\frac{1+\lambda^2}{4}\right) \right|> \alpha\}} \mu(\tau,\lambda) d\lambda \right)^{\frac{1}{p}-\frac{1}{q}} &= \sup_{\left|\varphi(s)\right|< \left|\varphi(\frac{1}{4})\right|}  \left|\varphi(s) \right| 
         \left( \int\limits_{\{  \lambda \in \R : \left| \varphi\left(\frac{1+\lambda^2}{4}\right) \right|> \left| \varphi(s) \right| \}} \mu(\tau,\lambda) d\lambda \right)^{\frac{1}{p}-\frac{1}{q}}\\
         &= \sup_{s\geq  \frac{1}{4}}  |\varphi(s)| \left( \int\limits_{\{  \lambda \in \R :|\lambda| <  2 \sqrt{s-\frac{1}{4}} \}} \mu(\tau,\lambda) d\lambda \right)^{\frac{1}{p}-\frac{1}{q}}.
    \end{align*}    
Considering the expression \eqref{defn_mu_t} of $\mu(\tau,\lambda)$ and utilizing  the estimates of $\tanh \lambda$ and $\coth \lambda$ for $\lambda$ near and away from zero, we can write the following
\begin{align*}
    |\mu(\tau,\lambda)| \asymp\begin{cases}
        \lambda^2 (1+|\lambda|)^{-1} \quad &\text{if } \tau=\tau_+\\
        (1+|\lambda|) \quad &\text{if } \tau=\tau_-.
    \end{cases}
\end{align*}
Utilizing the estimate above for $\tau=\tau_+$, we obtain
    \begin{align}
          \sup_{\alpha>0}  \alpha \left( \int\limits_{\{  \lambda \in \R : \left| \varphi\left(\frac{1+\lambda^2}{4}\right)  \right|> \alpha\}} \mu(\tau_+,\lambda) d\lambda \right)^{\frac{1}{p}-\frac{1}{q}} & \lesssim \max  \begin{cases}
              \sup\limits_{\frac{1}{4} < s \leq \frac{1}{2}}  \left|\varphi(s)  \right| \left( \int_0^{2 \sqrt{s-\frac{1}{4}}} \lambda^2 d\lambda \right)^{\frac{1}{p}-\frac{1}{q}},\\
              \sup\limits_{ s \geq \frac{1}{2}}  \left| \varphi(s) \right|  \left( \int_0^{1} \lambda^2 d\lambda +\int_1^{{2 \sqrt{s-\frac{1}{4}}}} \lambda d\lambda\right)^{\frac{1}{p}-\frac{1}{q}}.
          \end{cases} \nonumber \\
          & \lesssim \max \begin{cases} \sup\limits_{\frac{1}{4} < s\leq \frac{1}{2}}  \left| \varphi(s) \right|
              \left( s- \frac{1}{4}\right)^{\frac{3}{2}(\frac{1}{p}-\frac{1}{q})},\\
              \sup\limits_{ s\geq \frac{1}{2}}  \left| \varphi(s) \right| \left( s- \frac{1}{4}\right)^{(\frac{1}{p}-\frac{1}{q})}.
          \end{cases}\label{est_t-}
    \end{align}
    For $\tau = \tau_-$, we note that $|\mu(\tau_-,\lambda)| \asymp 1$ when $\lambda$ is near zero. Thus, by utilizing this observation and performing a calculation similar to the one above, we obtain in this case
    \begin{align}\label{est_t+}
           \sup_{\alpha>0}  \alpha \left( \int\limits_{\{  \lambda \in \R : \left| \varphi\left(\frac{1+\lambda^2}{4}\right) \right|> \alpha\}} \mu(\tau_-,\lambda) d\lambda \right)^{\frac{1}{p}-\frac{1}{q}}  \lesssim \max \begin{cases} \sup\limits_{\frac{1}{4} < s\leq \frac{1}{2}}  \left| \varphi(s) \right|
              \left( s- \frac{1}{4}\right)^{\frac{1}{2}(\frac{1}{p}-\frac{1}{q})},\\
              \sup\limits_{ s\geq \frac{1}{2}}  \left| \varphi(s) \right| \left( s- \frac{1}{4}\right)^{(\frac{1}{p}-\frac{1}{q})}.
          \end{cases}  
    \end{align}
   Finally, by substituting the estimates from \eqref{est_t+} and \eqref{est_t-} into \eqref{est_varom}, we conclude with the proof of the theorem.
\end{proof}
\begin{remark}
   We would like to note that if $\varphi, \varphi_1: \R \rightarrow \R$ are two functions such that $|\varphi_1| \leq |\varphi|$ and $\varphi$ satisfies all the hypotheses of Theorem \ref{thm_spec_mult}, then it follows that $ \|\varphi_1(\Omega)\|_{L^p(G)_{l,n} \rightarrow L^q(G)_{l,n}  }$ is dominated by the right hand side of the estimates in \eqref{est_op_+}, \eqref{est_op_-} in Theorem \ref{thm_spec_mult}.
\end{remark}
\section{Applications: Estimates for heat propagator of the Casimir element and  the global well-posedness of nonlinear equations}\label{sec_application}
This section is dedicated to the applications of our main results Theorem \ref{lp-lq-mutli} and \ref{thm_spec_mult} regarding the boundedness of the Fourier and spherical multipliers in $L^p$-$L^q$ spaces. We will begin with $L^p$-$L^q$ estimates of the heat propagator of the Casimir element for $(l,n)$ type functions on $G$.
\subsection{$L^p$-$L^q$ estimates of heat propagator of the Casimir element}\label{heat_l,n}
For fixed $ l,n \in \Z^{\tau}$, let $\Omega_{l,n}$ be the restriction of the differential operator $\Omega$ to $L^2(G)_{l,n}$. Since $\Omega$ preserves the types of a function and $l,n$ is always fixed,  we will use $\Omega$ instead of $\Omega_{l,n}$ for simplicity when $\Omega$ acts on $l,n$ type functions. Now, let us consider the following heat equation for $ 1<p\leq 2$ 
\begin{equation}
\begin{aligned}\label{23vis}
     u_t& =\Omega u,  \qquad t>0,\quad \text{and} \quad
    u(0,\cdot)&=u_0 \in L^p(G)_{l,n}. 
\end{aligned}
\end{equation}
 Then one can verify that for $t>0,$ 
 \begin{align*}
     u(t, x)=e^{t \Omega}u_0
 \end{align*} is the solution to the initial value problem \eqref{23vis}. Consequently, it represents the spectral multiplier \eqref{def_spec_m} operator with the symbol
 \begin{align*}
     \hspace{2cm} \varphi\left(\frac{1+\zeta^{2}}{4}\right)= e^{-t\left({\frac{1+\zeta^2}{4}}\right)},\qquad \text{for all } \zeta \in \R\cup i\Gamma_{l,n}.
 \end{align*}
Next, to establish the $L^p(G)_{l,n}$-$L^q(G)_{l,n}$ estimates for $q\in[2,\infty)$, we will utilize Theorem \ref{thm_spec_mult}. We observe that the function $s \mapsto e^{-t|s|}$ on $\R$ satisfies the hypotheses of Theorem \ref{thm_spec_mult}. Thus, applying Theorem \ref{thm_spec_mult}  with $\tau = \tau_+$, we obtain for all $l,n \in \mathbb{Z}^{\tau_+} $
\begin{align*}
     \|e^{t\Omega}\|_{L^p(G)_{l,n} \rightarrow L^q(G)_{l,n} } \lesssim \left( \sum_{k\in i\Gamma_{l,n}} \left| e^{-t\left(\frac{1+k^2}{4}\right)}\right| |k|^{\frac{1}{p}-\frac{1}{q}}  \right)+ \max \begin{cases} \sup\limits_{\frac{1}{4} < s\leq \frac{1}{2}}  e^{-ts}
              \left( s- \frac{1}{4}\right)^{\frac{3}{2}(\frac{1}{p}-\frac{1}{q})},\\
              \sup\limits_{ s\geq \frac{1}{2}}  e^{-ts} \left( s- \frac{1}{4}\right)^{(\frac{1}{p}-\frac{1}{q})}. 
              \end{cases} 
\end{align*}
By setting $x=(s-\frac{1}{4})^{\frac{1}{2}}$, the inequality above reduces to
\begin{equation}\label{Lp-q_heat_est}
\begin{aligned}
     \|e^{t\Omega}\|_{L^p(G)_{l,n} \rightarrow L^q(G)_{l,n} } \lesssim \left( \sum_{k\in i\Gamma_{l,n}} \left| e^{-t\left(\frac{1+k^2}{4}\right)} \right| |k|^{\frac{1}{p}-\frac{1}{q}} \right)  +  e^{-\frac{t}{4}} \max  \begin{cases}
             \sup\limits_{0< x  \leq \frac{1}{2}}  e^{-t x^2} x^{3(\frac{1}{p}-\frac{1}{q})},\\
            \sup\limits_{x \geq \frac{1}{2}}  e^{-t x^2} x^{2(\frac{1}{p}-\frac{1}{q})} .
          \end{cases} 
\end{aligned}
\end{equation}
To estimate the last term of the expression above, let us consider the following function 
\begin{align*}
    \psi_{\alpha,t}(x)= e^{-tx^2} x^{\frac{\alpha}{r}}, \quad \text{for }x > 0, 
\end{align*}
where $\alpha>0$ and $1/r=1/p-1/q$. By taking its derivative
\begin{align*}
    \frac{d}{dx} \psi_{\alpha,t} (x)= e^{-tx^2} \left( -2x^2 t+ \frac{\alpha}{r} \right) x^{\frac{\alpha}{r}-1} 
\end{align*}
we note that the function $ \frac{d}{dx} \psi_{\alpha,t}(x)$ can have zero only at $x_{\alpha,t} := \sqrt{\frac{\alpha}{2rt}}$. Moreover, the function changes sign from positive to negative at $x_{\alpha,t}$. Thus $x_{\alpha,t}$ is a point of maximum of $\psi_{\alpha,t}$ and so
\begin{align}\label{est_t>p,q}
   \sup_{  x >0}  e^{-t x^2}  x^{\frac{\alpha}{r}} =e^{-\frac{\alpha}{2r}}\left( \frac{\alpha}{2rt} \right)^{\frac{\alpha}{2r}} = C_{r,\alpha} \, t^{-\frac{\alpha}{2r}}. 
\end{align}
Here we observe that for a fixed $\alpha>0$, when $t$ goes towards zero, then $\sqrt{\frac{\alpha}{2rt}} $ goes towards infinity. Thus by taking $\alpha =2$ in \eqref{est_t>p,q}, we can write for $t\in (0,1]$
\begin{align}\label{t<1,h_t}
             \sup\limits_{0<x  \leq \frac{1}{2}}  e^{-t x^2} x^{3(\frac{1}{p}-\frac{1}{q})}\leq C \quad \text{and} \quad
            \sup\limits_{x \geq \frac{1}{2}}  e^{-t x^2} x^{2(\frac{1}{p}-\frac{1}{q})} \leq C t^{-(\frac{1}{p}-\frac{1}{q})}.
\end{align}
Next, for large $t$, we note $x\mapsto \psi_{2,t}(x) $ is a decreasing function on the interval $[\frac{1}{2},\infty)$. Sepcifically, for $t\geq \frac{4}{r}$, by utilizing the decreasing property of  $\psi_{2,t}(x)$, we derive
\begin{align}\label{t>1,x>1/2}
 \sup\limits_{x \geq \frac{1}{2}}  e^{-t x^2} x^{2(\frac{1}{p}-\frac{1}{q})} \leq  e^{- \frac{t}{4}} {2}^{-2(\frac{1}{p}-\frac{1}{q})}.
 \end{align}
 On the other hand, for large $t$, $\sqrt{\frac{\alpha}{2rt}} $ goes towards zero, so we can write from \eqref{est_t>p,q}
 \begin{align}\label{t>1,x<1/2}
      \sup\limits_{0< x  \leq \frac{1}{2}}  e^{-t x^2} x^{3(\frac{1}{p}-\frac{1}{q})} \leq C_{p,q}  t^{-\frac{3}{2}{(\frac{1}{p}-\frac{1}{q})}}.
 \end{align}
Again for large  $t$, we have $e^{-\frac{t}{4}}\lesssim t^{{-\frac{3}{2}{(\frac{1}{p}-\frac{1}{q})}}}$, and for $t$ within any compact interval on $(0,\infty)$,  the operator  norm $ \|e^{t\Omega}\|_{L^p(G)_{l,n} \rightarrow L^q(G)_{l,n} } $ can be bounded by a constant. Thus, substituting the estimates \eqref{t<1,h_t}, \eqref{t>1,x>1/2}, and \eqref{t>1,x<1/2} in \eqref{Lp-q_heat_est} yields the following
\begin{align*}
     \|e^{t\Omega}\|_{L^p(G)_{l,n} \rightarrow L^q(G)_{l,n} } \lesssim  \left( \sum_{k\in i\Gamma_{l,n}} \left| e^{-t\left(\frac{1+k^2}{4}\right)} \right| |k|^{\frac{1}{p}-\frac{1}{q}} \right) + \begin{cases}
         t^{-\left( \frac{1}{p}-\frac{1}{q}\right)}, \quad & \text{if } 0<t\leq 1,\\
         e^{-\frac{t}{4}} t^{-\frac{3}{2}\left( \frac{1}{p}-\frac{1}{q}\right)} & \text{if } t\geq 1,
     \end{cases}
\end{align*}
whence we obtain
\begin{align*}
     \|u(t, \cdot)\|_{L^q(G)_{l,n}} &= \|e^{-t \Omega}u_0\|_{L^q(G)_{l,n}} \\
     & \leq \|e^{-t \Omega}\|_{L^p(G)_{l,n} \rightarrow L^q(G)_{l,n} } \|u_0\|_{L^p(G)_{l,n}}\nonumber \\&\lesssim  \|u_0\|_{L^p(G)_{l,n}} \Bigg[ \left( \sum_{k\in i\Gamma_{l,n}} \left| e^{-t\left(\frac{1+k^2}{4}\right)} \right| |k|^{\frac{1}{p}-\frac{1}{q}} \right) \\& \hspace{4cm} \quad\quad+\begin{cases}
         t^{-\left( \frac{1}{p}-\frac{1}{q}\right)}, \quad & \text{if } 0<t\leq 1,\\
         e^{-\frac{t}{4}} t^{-\frac{3}{2}\left( \frac{1}{p}-\frac{1}{q}\right)} & \text{if } t\geq 1.
         \end{cases}\Bigg]
\end{align*}
Next, for the case where $\tau =\tau_-$, we can derive the following from Theorem \ref{thm_spec_mult} (2), after performing the change of variable $x= (s-\frac{1}{4})^{\frac{1}{2}}$
\begin{align}\label{Lp-q_heat_est_t-}
     \|e^{t\Omega}\|_{L^p(G)_{l,n} \rightarrow L^q(G)_{l,n} } \lesssim \left( \sum_{k\in i\Gamma_{l,n}} \left| e^{-t\left(\frac{1+k^2}{4}\right)} \right| |k|^{\frac{1}{p}-\frac{1}{q}} \right)+  e^{-\frac{t}{4}} \max\begin{cases} \sup\limits_{0< x \leq   \frac{1}{2}}  e^{-t x^2}
              x^{(\frac{1}{p}-\frac{1}{q})},\\
             \sup\limits_{x \geq \frac{1}{2}}  e^{-t x^2} x^{2(\frac{1}{p}-\frac{1}{q})} .
          \end{cases} 
\end{align}
Following a similar calculation as in the previous case, we can obtain
\begin{align*}
     \|u(t, \cdot)\|_{L^q(G)_{l,n}} &= \|e^{-t \Omega}u_0\|_{L^q(G)_{l,n}} \\
     & \leq \|e^{-t \Omega}\|_{L^p(G)_{l,n} \rightarrow L^q(G)_{l,n} } \|u_0\|_{L^p(G)_{l,n}}\nonumber \\&\lesssim  \|u_0\|_{L^p(G)_{l,n}} \Bigg[ \left( \sum_{k\in i\Gamma_{l,n}} \left| e^{-t\left(\frac{1+k^2}{4}\right)} \right| |k|^{\frac{1}{p}-\frac{1}{q}} \right) \\& \hspace{4cm} +\begin{cases} t^{-(\frac{1}{p}-\frac{1}{q})} \quad & \text{if}\quad 0<t \leq 1\\e^{-\frac{t}{4}}\,\,  t^{- \frac{3}{2}(\frac{1}{p}-\frac{1}{q})} \quad & \text{if}\quad t \geq  1.\end{cases} \Bigg]
\end{align*}
\begin{remark}\label{rem_dec_heat}
If either $l$ or $n$ equals zero, the definition \eqref{def_Gam_l,n} of $\Gamma_{l,n}$ renders it a null set. Consequently, in these cases, we observe that
    \begin{align*}
         \|u(t, \cdot)\|_{L^q(G)_{l,n}}\lesssim   \|u_0\|_{L^p(G)_{l,n}} \left( \begin{cases} t^{-(\frac{1}{p}-\frac{1}{q})} \quad & \text{if}\quad 0<t \leq 1, \\e^{-\frac{t}{4}}\,\,  t^{- \frac{3}{2}(\frac{1}{p}-\frac{1}{q})} \quad & \text{if}\quad t \geq 1, \end{cases}\right)
    \end{align*}
    showing decay as $t$ approaches infinity, similar to what occurs in symmetric spaces \cite[Theorem 3.2]{CGM93} in the $K$-biinvariant setting. However, in general cases, we observe growth in the solution of the heat equation near infinity. As mentioned in the introduction, this growth arises from an additional discrete component of the Plancherel measure in $G$. Consequently, in the full group scenario, we believe the discrete part will contribute to the solution's growth as time progresses towards infinity in the $\Omega$-heat equation.
\end{remark}
We next utilize Theorem \ref{thm_spec_mult} to obtain Sobolev type embeddings for the Casimir element.
\subsection{Sobolev type embeddings} To obtain the Sobolev type inequality for the Casimir element $\Omega$ defined by \eqref{Casimirele}, we apply Theorem \ref{thm_spec_mult}  for the spectral multiplier defined by the symbol 
\begin{align*}
     \hspace{2cm} \varphi\left(\frac{1+\zeta^{2}}{4}\right)= \left(1+ \left({\frac{1+\zeta^2}{4}}\right) \right)^{b-a},\qquad \text{for all } \zeta \in \R\cup i\Gamma_{l,n},
 \end{align*}
 for $a>b.$ We note that for $a>b$,  the function  $s\mapsto \left(1+s\right)^{-(a-b)}$, $s\geq 0$, decreases monotonically and continuously on $[\frac{1}{4}, \infty)$, and approaches to zero at infinity.  Therefore,  as an application of Theorem \ref{thm_spec_mult} $(1)$ for $\tau=\tau_+$, we deduce that for $u \in C_c^{\infty}(G)_{l,n}$

\begin{align*}
    \|(1-\Omega)^{-(a-b)} u\|_{L^q(G)_{l,n}  }& \lesssim \|u\|_{L^p(G)_{l,n}} \Bigg[ \left( \sum_{k\in i\Gamma_{l,n}} \left| \left(1+ \left({\frac{1+k^2}{4}}\right)  \right)\right|^{-(a-b)} \right)^{\frac{1}{p}-\frac{1}{q}}  \\&\qquad+  \max \begin{cases} \sup\limits_{\frac{1}{4} \leq s \leq  \frac{1}{2}} (1+s)^{-(a-b)} (s -\frac{1}{4})^{\frac{3}{2} (\frac{1}{p}- \frac{1}{q})},\\
      \sup\limits_{s \geq  \frac{1}{2}} (1+s)^{-(a-b)} (s -\frac{1}{4})^{ (\frac{1}{p}- \frac{1}{q})}. \end{cases} \Bigg] 
\end{align*}
By considering the supremum near $s=\infty$, we observe that the following expression 
\begin{align*}
    \left( \sum_{k\in i\Gamma_{l,n}} \left| \left(1+ \left({\frac{1+k^2}{4}}\right)  \right)\right|^{-(a-b)} \right)^{\frac{1}{p}-\frac{1}{q}}  + \max \begin{cases} \sup\limits_{\frac{1}{4} \leq s \leq  \frac{1}{2}} (1+s)^{-(a-b)} (s -\frac{1}{4})^{\frac{3}{2} (\frac{1}{p}- \frac{1}{q})},\\
      \sup\limits_{s \geq  \frac{1}{2}} (1+s)^{-(a-b)} (s -\frac{1}{4})^{ (\frac{1}{p}- \frac{1}{q})}. \end{cases} 
       \end{align*}
is finite provided that $$(a-b)\geq \frac{1}{p}-\frac{1}{q}.$$
Therefore, for $1<p \leq 2 \leq  q <\infty $ and $(\frac{1}{p}-\frac{1}{q}) < a-b$ we have that 
\begin{equation} \label{sobem}
    \|(1-\Omega)^{b} u\|_{L^q(G)_{l,n}  } \lesssim  \|(1-\Omega)^{a} u\|_{L^p(G)_{l,n}  }.
\end{equation}
In a similar way, for $\tau=\tau_{-},$ as an application of Theorem \ref{thm_spec_mult} $(2)$ we get the same conclusion \eqref{sobem}. In particular, by taking $b=0$ and defining Sobolev space $W^{\alpha, p}(G)_{l, n}, \alpha>0,$ by the norm $\|u\|_{W^{\alpha, p}(G)_{l, n}}:=\|(1-\Omega)^{\frac{\alpha}{2}} u\|_{L^p(G)_{l,n} }$ we get the following Sobolev type embedding
\begin{equation}
     \| u\|_{L^q(G)_{l,n}  } \lesssim  \|(1-\Omega)^{\frac{\alpha}{2}} u\|_{L^p(G)_{l,n}  }=\|u\|_{W^{\alpha, p}(G)_{l, n}}
\end{equation}
provided that $\alpha\geq  \left(\frac{1}{p}-\frac{1}{q} \right)$ with $1<p\leq 2 \leq q<\infty.$

\subsection{Global well-posedness of nonlinear equations}  This subsection is dedicated to the implications of our main result Theorem \ref{lp-lq-mutli}. Our approach follows the general outline of \cite{DKRN} concerning Fourier analysis linked with the biorthogonal eigenfunction expansion of a model operator on smooth manifolds characterized by a discrete spectrum.  Now, we will delve into the well-posedness of nonlinear abstract Cauchy problems for $(l,n)$-type functions on $G$.
\subsubsection{Nonlinear heat equations}
 We will now consider the following Cauchy problem for a nonlinear evolution equation in the space $L^\infty(0, T, L^2(G)_{l,n}),$
 \begin{equation} \label{heat}
     u_t-|Bu(t)|^p=0,\quad u(0)=u_0,
 \end{equation}
 where $1<p<\infty$ and $B$ is a Fourier multiplier defined in \eqref{mult_(l,n)} on $L^2(G)_{l,n}$.

We say that the heat equation \eqref{heat} has a solution $u$ if the following integral equation holds:
\begin{equation}\label{heatsol}
u(t)=u_0+\int_0^t |Bu(\tau)|^p d\tau
\end{equation}
in the space $L^\infty(0, T, L^p(G)_{l,n})$ for every $T<\infty$. Additionally, we define $u$ as a local solution of \eqref{heat} if it satisfies equation \eqref{heatsol} in the space $L^\infty(0, T^*, L^2(G)_{l,n})$ for some $T^*>0$.
 Then, we have the following result.
 \begin{theorem}
 Let $1<p<\infty.$ Assume that $B$ is a Fourier multiplier defined in \eqref{mult_(l,n)},  such that its symbol $m$ satisfies $$\sup_{\alpha>0} \alpha \left( \int\limits_{\{  \lambda \in \R : |m(\lambda)|> \alpha\}} \mu(\tau,\lambda) d\lambda \right)^{\frac{1}{2}-\frac{1}{2p}} + \sum_{k \in i\Gamma_{l,n}}   |m(k)| |k|^{\frac{1}{2}-\frac{1}{2p}}<\infty.$$ Then the Cauchy problem \eqref{heat} has a local solution in the space $L^\infty(0, T^*, L^2(G)_{l,n})$ for some $T^*>0.$ 
 \end{theorem}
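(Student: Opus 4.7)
The plan is to set up a Banach contraction mapping argument for the integral operator
\[
\Phi(u)(t) = u_0 + \int_0^t |Bu(\tau)|^p \, d\tau
\]
acting on a closed ball in $L^\infty(0, T^*; L^2(G))$ whose radius $R$ is chosen in terms of $\|u_0\|_{L^2}$ and whose time horizon $T^* > 0$ is chosen small. The starting observation is that the summability assumption on the symbol $m$ is precisely the hypothesis of Theorem \ref{lp-lq-mutli} with the pair $(p_1, q_1) = (2, 2p)$, since $\frac{1}{p_1} - \frac{1}{q_1} = \frac{1}{2} - \frac{1}{2p}$. Hence there exists a constant $C_B > 0$ such that
\[
\|Bv\|_{L^{2p}(G)_{l,n}} \leq C_B \|v\|_{L^2(G)_{l,n}}, \qquad v \in L^2(G)_{l,n},
\]
and in particular $\||Bv|^p\|_{L^2(G)} = \|Bv\|_{L^{2p}}^p \leq C_B^p \|v\|_{L^2}^p$.

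For the self-map property, take $R := 2\|u_0\|_{L^2}$ and consider $X_{T^*, R} := \{u : \|u\|_{L^\infty(0,T^*;L^2)} \leq R\}$. Then for $u \in X_{T^*,R}$,
\[
\|\Phi(u)(t)\|_{L^2} \leq \|u_0\|_{L^2} + T^* C_B^p R^p,
\]
and taking $T^*$ small enough that $T^* C_B^p R^{p-1} \leq 1/2$ we get $\Phi(X_{T^*,R}) \subset X_{T^*,R}$. For the contraction estimate, I would exploit the elementary pointwise inequality
\[
\bigl||a|^p - |b|^p\bigr| \leq p\bigl(|a|^{p-1} + |b|^{p-1}\bigr)|a-b|, \qquad a,b \in \mathbb{C}, \ p > 1,
\]
combined with H\"older's inequality in the dual pair $\frac{2p}{p-1}$ and $2p$. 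This yields
\[
\bigl\||Bu|^p - |Bv|^p\bigr\|_{L^2} \leq p\bigl(\|Bu\|_{L^{2p}}^{p-1} + \|Bv\|_{L^{2p}}^{p-1}\bigr) \|B(u-v)\|_{L^{2p}} \leq 2p C_B^p R^{p-1} \|u-v\|_{L^2},
\]
invoking the $L^2 \to L^{2p}$ bound for $B$ twice. Integrating in time gives
\[
\|\Phi(u) - \Phi(v)\|_{L^\infty(0,T^*;L^2)} \leq 2p T^* C_B^p R^{p-1} \|u-v\|_{L^\infty(0,T^*;L^2)},
\]
so a further shrinkage of $T^*$ to make $2p T^* C_B^p R^{p-1} < 1$ renders $\Phi$ a strict contraction. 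Banach's fixed-point theorem then produces a unique $u \in X_{T^*,R}$ satisfying the integral equation \eqref{heatsol}, which is the desired local solution.

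The main obstacle is the Lipschitz-type estimate for the nonlinearity $w \mapsto |w|^p$: a naive bound in $L^2$ alone does not close, and one genuinely needs $L^{2p}$ control on $Bu$, which is precisely what Theorem \ref{lp-lq-mutli} delivers through the prescribed summability of $m$. A secondary subtlety is that $|Bu|^p$ is $K$-biinvariant (since $|Bu(k_\theta g k_\vartheta)| = |Bu(g)|$) rather than of $(l,n)$-type, so strictly speaking $\Phi$ does not preserve $L^2(G)_{l,n}$ unless $l=n=0$; the natural remedy is to read the ambient space as $L^\infty(0,T^*; L^2(G))$ with $u_0 \in L^2(G)_{l,n}$, after which the contraction argument above goes through without change.
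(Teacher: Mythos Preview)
Your approach is essentially the same as the paper's: both use Theorem \ref{lp-lq-mutli} with the exponent pair $(2,2p)$ to obtain $B:L^2\to L^{2p}$ boundedness and then run a fixed-point argument on the integral formulation \eqref{heatsol}, though you spell out the contraction step explicitly whereas the paper only derives the a priori bound \eqref{EQ:time-space-norm} on the set $\mathfrak{O}_c$ and then appeals to ``the fixed point theorem'' without further detail. Your closing remark that $|Bu|^p$ is $K$-biinvariant rather than of $(l,n)$-type is a valid observation that the paper itself does not address.
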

 \begin{proof}
 By integrating equation \eqref{heat} with respect to $t$, we obtain
 $$
u(t)=u_{0} + \int\limits_0^t |Bu(\tau)|^{p} d\tau.
$$
Taking the $L^2$-norm on both sides yields
\begin{equation}\label{nh_sol}
\begin{aligned}
    \|u(t)\|_{L^2(G)_{l,n}}^{2}  &\leq C \Bigg(\|u_0\|_{L^2(G)_{l,n}}^2+\left\| \int_0^t |Bu(t)|^p\, d\tau \right\|^2_{L^2(G)_{l,n}} \Bigg)\\& = C \Bigg(\|u_0\|_{L^2(G)_{l,n}}^2+ \int_{G} \left| \int_0^t |Bu(t)|^p\, d\tau  \right|^2 \,dx\Bigg).
\end{aligned}
\end{equation}
Applying the H\"older's inequality, we get
\begin{align*}
    \int_0^t|Bu(\tau)|^p\, d\tau \leq \left(\int_0^t 1 \, d\tau \right)^{\frac{1}{2}} \left(\int_0^t|Bu(\tau)|^{2p}\, d\tau \right)^{\frac{1}{2}}= t^{\frac{1}{2}} \left(\int_0^t\left|Bu(\tau) \right|^{2p}\, d\tau \right)^{\frac{1}{2}}.
\end{align*}
Substituting the inequality above in \eqref{nh_sol}, it follows that
\begin{align*}
    \|u(t)\|_{L^2(G)_{l,n}}^{2}  &\leq C \Bigg(\|u_0\|_{L^2(G)_{l,n}}^2+ t \int_{G}   \int_0^t |Bu(t)|^{2p}\, d\tau\,  \,dx\Bigg)\\& \leq  C \Bigg(\|u_0\|_{L^2(G)_{l,n}}^2+ t    \int_0^t \int_{G} |Bu(t)|^{2p}\,  \,dx\, d\tau\Bigg) \\& = C \Bigg(\|u_0\|_{L^2(G)_{l,n}}^2+ t    \int_0^t \|Bu(t)\|^{2p}_{L^{2p}(G)_{l,n}}\, d\tau\Bigg).
\end{align*}
Next, utilizing the hypothesis on the symbol of $B$, it follows from Theorem \ref{lp-lq-mutli} that $B$ is a bounded operator from $L^2(G)_{l,n}$ to $L^{2p}(G)_{l,n}$, moreover $\|B u(t)\|_{L^{2p}(G)_{l,n}} \leq C\|u(t)\|_{L^2(G)_{l,n}}$. This implies
\begin{align}\label{EQ:space-norm}
    \|u(t)\|_{L^2(G)_{l,n}}^{2} \leq C \Bigg(\|u_0\|_{L^2(G)_{l,n}}^2+ t    \int_0^t  \|u(t)\|^{2p}_{L^{2p}(G)_{l,n}}\, d\tau\Bigg),
\end{align}
for some constant $C$ independent from {$u_0$} and $t$. By taking $L^{\infty}$-norm in time on both sides of the estimate \eqref{EQ:space-norm}, we obtain
\begin{equation}\label{EQ:time-space-norm}
\|u\|_{L^{\infty}(0, T; L^2(G)_{l,n})}^{2}\leq C\Big(\|u_{0}\|_{L^2(G)_{l,n}}^{2} + T^{2} \|u\|^{2p}_{L^{\infty}(0, T; L^2(G)_{l,n})}\Big).
\end{equation}
Let us introduce the following set
\begin{equation}\label{u-Set}
\mathfrak{O}_c:=\left\{u\in L^{\infty}(0, T; L^2(G)_{l,n}): \|u\|_{L^{\infty}(0, T; L^2(G)_{l,n})} \leq c \|u_{0}\|_{L^2(G)_{l,n}}\right\},
\end{equation}
for some constant $c > 1$. Then, for $u \in \mathfrak{O}_c$ we have
$$
\|u_{0}\|_{L^2(G)_{l,n}}^{2} + T^{2} \|u\|^{2p}_{L^{\infty}(0, T; L^2(G)_{l,n})} \leq 
\|u_{0}\|_{L^2(G)_{l,n}}^{2} + T^{2} c^{2p} \|u_0\|^{2p}_{L^2(G)_{l,n}}.
$$
Finally, for $u$ to be from the set $\mathfrak{O}_c$ it is enough to have, by invoking \eqref{EQ:time-space-norm}, that 
$$
\|u_{0}\|_{L^2(G)_{l,n}}^{2} + T^{2} c^{2p} \|u_0\|^{2p}_{L^2(G)_{l,n}}\leq c^{2} \|u_0\|_{L^2(G)_{l,n}}^{2}.
$$
It can be obtained by requiring the following,
$$
T \leq T^{\ast}:=\frac{\sqrt{c^{2}-1}}{c^{p}\|u_0\|_{L^2(G)_{l,n}}}.
$$
Thus, by applying the fixed point theorem, there exists a unique local solution $u\in L^{\infty}(0, T^{\ast};$ $L^2(G)_{l,n})$ of the Cauchy problem \eqref{heat}.
 \end{proof}
 
\subsubsection{Nonlinear wave equation} In this subsection, we will consider that the initial value problem  for the nonlinear wave equation  
\begin{align}\label{E-WNLE}
u_{tt}(t) - \Psi(t)|Bu(t)|^{p} = 0,
\end{align} with the initial condition 
$$
u(0)=u_0, \,\,\, u_t(0)=u_1,
$$
where  $1< p<\infty$, $\Psi$ is a positive bounded function depending only on time, and $B$ is a linear operator initially defined in $L^2(G)_{l,n}$. We will study the well-posedness of the equation \eqref{E-WNLE}.

We say that the initial value problem \eqref{E-WNLE} admits a global solution $u$ in $[0,T]$ if it satisfies
\begin{equation}\label{E-WNLE-Sol}
u(t)=u_{0} + t u_{1} + \int\limits_0^t (t-\tau) \Psi(\tau) |Bu(\tau)|^{p} d\tau
\end{equation}
in the space $L^{\infty}(0, T; L^2(G)_{l,n})$ for every $T<\infty$.

We say that \eqref{E-WNLE} admits a local solution $u$ if it satisfies
the equation \eqref{E-WNLE-Sol} in the space $L^{\infty}(0, T^{\ast}; L^2(G)_{l,n})$ for some $T^{\ast}>0$.

Then, we have the following result regarding the global and local solutions of \eqref{E-WNLE}.
\begin{theorem}\label{Th: E-WNLE}
Let $1< p<\infty$. Suppose that $B$ is a Fourier multiplier defined in \eqref{mult_(l,n)}, such that its symbol $m$ satisfies $$\sup_{\alpha>0} \alpha \left( \int\limits_{\{  \lambda \in \R : |m(\lambda)|> \alpha\}} \mu(\tau,\lambda) d\lambda \right)^{\frac{1}{2}-\frac{1}{2p}} + \sum_{k \in i\Gamma_{l,n}}   |m(k)| |k|^{\frac{1}{2}-\frac{1}{2p}}<\infty.$$

\begin{itemize}
    \item [(i)] Assume that $\|\Psi\|_{L^{2}(0, T)}<\infty$ for some $T>0$. Then the Cauchy problem \eqref{E-WNLE} has a local solution in  $L^{\infty}(0, T; L^2(G)_{l,n})$.
    \item [(ii)] Suppose $u_1$ is identically equal to zero, and let $\gamma > 3/2$. Additionally, assume that there exists a constant $c>0$ such that $$\|\Psi\|_{L^{2}(0, T)} \leq c  T^{-\gamma}$$ for every $T > 0$. Then, for every $T > 0$, the Cauchy problem \eqref{E-WNLE} has a global solution in the space $L^{\infty}(0, T; L^2(G)_{l,n})$ for sufficiently small $u_0$ in the $L^2$-norm.
\end{itemize}
\end{theorem}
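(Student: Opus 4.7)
The plan is to apply Banach's fixed-point theorem to the Duhamel map
\[
\Phi[u](t) := u_{0} + t\,u_{1} + \int_{0}^{t} (t-\tau)\,\Psi(\tau)\,|Bu(\tau)|^{p}\, d\tau
\]
on a closed ball in $L^{\infty}(0,T;L^{2}(G)_{l,n})$, exactly in the spirit of the nonlinear heat theorem proved just above. The whole argument hinges on a single boundedness statement for $B$: the symbol hypothesis on $m$ is precisely the Theorem \ref{lp-lq-mutli} hypothesis for $(p,q)=(2,2p)$, and so $B$ extends to a bounded operator $L^{2}(G)_{l,n}\to L^{2p}(G)_{l,n}$ with $\|Bw\|_{L^{2p}(G)_{l,n}} \leq C\|w\|_{L^{2}(G)_{l,n}}$.

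The a priori estimate then goes as follows. Take the $L^{2}(G)_{l,n}$ norm of $\Phi[u](t)$, apply Minkowski's integral inequality in $\tau$, and note that $\||Bu(\tau)|^{p}\|_{L^{2}(G)_{l,n}} = \|Bu(\tau)\|_{L^{2p}(G)_{l,n}}^{p}$. Substituting the multiplier bound and using Cauchy--Schwarz in $\tau$ together with $(t-\tau)\leq T$ to separate $\|\Psi\|_{L^{2}(0,T)}$ from the $u$-factors gives
\[
\|\Phi[u]\|_{L^{\infty}(0,T;L^{2}(G)_{l,n})} \leq \|u_{0}\|_{L^{2}(G)_{l,n}} + T\|u_{1}\|_{L^{2}(G)_{l,n}} + C\,T^{3/2}\,\|\Psi\|_{L^{2}(0,T)}\,\|u\|^{p}_{L^{\infty}(0,T;L^{2}(G)_{l,n})}.
\]
A parallel computation on $\Phi[u]-\Phi[v]$, based on the elementary inequality $||a|^{p}-|b|^{p}|\leq p(|a|^{p-1}+|b|^{p-1})|a-b|$ and a H\"older split in $G$ with indices $(\tfrac{2p}{p-1},2p)$, yields a Lipschitz estimate of the shape
\[
\|\Phi[u]-\Phi[v]\|_{L^{\infty}(0,T;L^{2})} \leq C'\,T^{3/2}\,\|\Psi\|_{L^{2}(0,T)}\bigl(\|u\|^{p-1}_{L^{\infty}(0,T;L^{2})}+\|v\|^{p-1}_{L^{\infty}(0,T;L^{2})}\bigr)\|u-v\|_{L^{\infty}(0,T;L^{2})}.
\]

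For \emph{part (i)}, fix $c>1$, set $M=\|u_{0}\|_{L^{2}(G)_{l,n}}+\|u_{1}\|_{L^{2}(G)_{l,n}}$, and work on the ball $\mathfrak{O}_{c}:=\{u:\|u\|_{L^{\infty}(0,T;L^{2}(G)_{l,n})}\leq cM\}$. The hypothesis $\|\Psi\|_{L^{2}(0,T)}<\infty$ allows us to pick $T^{\ast}>0$ small enough that the scalar $C\,(T^{\ast})^{3/2}\,\|\Psi\|_{L^{2}(0,T^{\ast})}(cM)^{p-1}$ is strictly smaller than both $(c-1)/c$ and $1/2$; this makes $\Phi$ a self-map of $\mathfrak{O}_{c}$ and a contraction, and Banach's theorem delivers a unique local solution. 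For \emph{part (ii)}, with $u_{1}\equiv 0$ and $\|\Psi\|_{L^{2}(0,T)}\leq c\,T^{-\gamma}$, both the invariance and contraction conditions collapse to an inequality of the form $C''\,T^{3/2-\gamma}\,\|u_{0}\|_{L^{2}}^{p-1}\ll 1$. Because $\gamma>3/2$ renders the exponent $3/2-\gamma$ negative, for each fixed $T>0$ this becomes a (possibly $T$-dependent) smallness condition on $\|u_{0}\|_{L^{2}(G)_{l,n}}$ that can be met by the assumption of the theorem, and the fixed-point iteration then produces a solution on the full interval $[0,T]$.

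The principal obstacle is purely conceptual: one must recognize that the symbol condition in the hypothesis is exactly the Theorem \ref{lp-lq-mutli} hypothesis at $(p,q)=(2,2p)$, which is what unlocks the $L^{2}\to L^{2p}$ bound for $B$ and hence the entire estimate. After that, the remainder is the standard Picard--Banach scheme already illustrated for the heat equation, with the only genuine novelty being the extra factor $T^{3/2}$ produced by the two time-integrations that convert the wave operator into the Duhamel form \eqref{E-WNLE-Sol}, and the book-keeping that turns the decay hypothesis $\|\Psi\|_{L^{2}(0,T)}\leq cT^{-\gamma}$ with $\gamma>3/2$ into controllability of this same factor in part (ii).
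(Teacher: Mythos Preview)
Your proposal is correct and follows essentially the same route as the paper: both invoke Theorem \ref{lp-lq-mutli} at $(p,q)=(2,2p)$ to obtain $B:L^{2}(G)_{l,n}\to L^{2p}(G)_{l,n}$, derive the a priori bound $\|\Phi[u]\|\lesssim(\text{data})+T^{3/2}\|\Psi\|_{L^{2}(0,T)}\|u\|^{p}$ for the Duhamel map (the paper squares throughout, producing the equivalent $T^{3}\|\Psi\|_{L^{2}}^{2}$), and close with a fixed-point argument on a suitable ball. Your write-up is in fact slightly more complete, since you supply the Lipschitz contraction estimate explicitly (the paper only verifies the self-map condition before invoking the fixed-point theorem) and your choice of ball in part~(ii) is simpler than the paper's $T^{\gamma_{0}}$-weighted ball while reaching the same conclusion.
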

\begin{proof} (i) By integrating the equation \eqref{E-WNLE} two times  in $t$ one gets
$$
u(t)=u_{0} + t u_{1} + \int\limits_0^t (t-\tau) \Psi(\tau) |Bu(\tau)|^{p} d\tau.
$$
Taking the $L^2$-norm on both sides, for $t < T$, we obtain
\begin{align*}
    \|u(t)\|_{L^2(G)_{l,n}}^{2}&\leq  C\left\{ \|u_{0}\|_{L^2(G)_{l,n}}^{2} + t^2 \|u_{1}\|_{L^2(G)_{l,n}}^{2}+ \left\|\int\limits_0^t (t-\tau) \Psi(\tau) |Bu(\tau)|^{p} d\tau \right\|_{L^2(G)_{l,n}}^{2} \right\} \\& \leq C\left\{ \|u_{0}\|_{L^2(G)_{l,n}}^{2} + t^2 \|u_{1}\|_{L^2(G)_{l,n}}^{2}+ \int_{G} \Big|\int\limits_0^t (t-\tau) \Psi(\tau) |Bu(\tau)|^{p} d\tau \Big|^2  dx \right\} \\& \leq C\left\{ \|u_{0}\|_{L^2(G)_{l,n}}^{2} + t^2 \|u_{1}\|_{L^2(G)_{l,n}}^{2}+ \int_{G} \Big(t \int\limits_0^t \Big| \Psi(\tau)|Bu(\tau)|^{p}\Big| d\tau \Big)^{2}  dx \right\}.
\end{align*}
Now, applying the H\"older's inequality on the last integral, it follows that
\begin{align*}
    \|u(t)\|_{L^2(G)_{l,n}}^{2} & \leq C \left\{ \|u_{0}\|_{L^2(G)_{l,n}}^{2} + t^2 \|u_{1}\|_{L^2(G)_{l,n}}^{2}+ \int_{G} t^{2} \int\limits_0^t \Big| \Psi(\tau)\Big|^{2} d\tau \int\limits_0^t \Big| Bu(\tau)\Big|^{2p} d\tau  dx \right\} \\& \leq C\left\{ \|u_{0}\|_{L^2(G)_{l,n}}^{2} + t^2 \|u_{1}\|_{L^2(G)_{l,n}}^{2}+ t^2 \|\Psi\|_{L^2(0,T)}^2 \int_{G}  \int\limits_0^t \Big| Bu(\tau)\Big|^{2p} d\tau  dx \right\} \\& \leq C\left\{ \|u_{0}\|_{L^2(G)_{l,n}}^{2} + t^2 \|u_{1}\|_{L^2(G)_{l,n}}^{2}+ t^2 \|\Psi\|_{L^2(0,T)}^2 \int\limits_0^t \|Bu(\tau)\|^{2p}_{L^{2p}(G)_{l,n}} d\tau \right\}.
\end{align*}
Next, utilizing the hypothesis on the symbol of $B$, it follows from Theorem \ref{lp-lq-mutli} that $B$ is a bounded operator from $L^2(G)_{l,n}$ to $L^{2p}(G)_{l,n}$, moreover, $\|B u(t)\|_{L^{2p}(G)_{l,n}} \leq C\|u(t)\|_{L^2(G)_{l,n}}$. Therefore, the inequality above yields
\begin{equation}
\label{EQ: WE-space-norm}
\|u(t)\|_{L^2(G)_{l,n}}^{2}\leq C(\|u_{0}\|_{L^2(G)_{l,n}}^{2} + t^2 \|u_{1}\|_{L^2(G)_{l,n}}^{2}+ t^{2} \|\Psi\|_{L^{2}(0, T)}^{2} \int\limits_0^t \|u(\tau)\|^{2p}_{L^{2p}(G)_{l,n}} d\tau),
\end{equation}
for some constant $C$ independent of $u_0, u_1$ and $t$. Finally, by taking the $L^{\infty}$-norm in time on both sides of the estimate \eqref{EQ: WE-space-norm}, one obtains
\begin{equation}
\label{EQ: WE-time-space-norm}
\|u\|_{L^{\infty}(0, T; L^2(G)_{l,n})}^{2}\leq C (\|u_{0}\|_{L^2(G)_{l,n}}^{2} + T^2 \|u_{1}\|_{L^2(G)_{l,n}}^{2}+ T^{3} \|\Psi\|_{L^{2}(0, T)}^{2} \|u\|^{2p}_{L^{\infty}(0, T; L^2(G)_{l,n})}). 
\end{equation}

Let us introduce the set
\begin{equation}
Q_c:=\Big\{u\in L^{\infty}(0, T; L^2(G)_{l,n}): \|u\|_{L^{\infty}(0, T; L^2(G)_{l,n})}^{2} \leq
c(\|u_{0}\|_{L^2(G)_{l,n}}^{2} + T^2 \|u_{1}\|_{L^2(G)_{l,n}}^{2})\Big\}
\end{equation}
for some constant $c \geq 1$. Then, for $u \in  Q_c$ we have
\begin{align}\label{WE-Est}
&\|u_{0}\|_{L^2(G)_{l,n}}^{2} + T^2 \|u_{1}\|_{L^2(G)_{l,n}}^{2} + T^{3} \|\Psi\|_{L^{2}(0, T)}^{2} \|u\|^{2p}_{L^{\infty}(0, T; L^2(G)_{l,n})} \nonumber \\
&\leq \|u_{0}\|_{L^2(G)_{l,n}}^{2} + T^2 \|u_{1}\|_{L^2(G)_{l,n}}^{2} + T^{3} \|\Psi\|_{L^{2}(0, T)}^{2} c^{p}\Big(\|u_{0}\|_{L^2(G)_{l,n}}^{2} + T^2 \|u_{1}\|_{L^2(G)_{l,n}}^{2}\Big)^{p}. 
\end{align}

We observe that, to have $u$ from the set $Q_c$ it is enough to have, by invoking \eqref{EQ: WE-time-space-norm} and using \eqref{WE-Est}, that 
\begin{align*}
\begin{split}
\|u_{0}\|_{L^2(G)_{l,n}}^{2} + &T^2 \|u_{1}\|_{L^2(G)_{l,n}}^{2} + T^{3} \|\Psi\|_{L^{2}(0, T)}^{2} c^{p}\Big(\|u_{0}\|_{L^2(G)_{l,n}}^{2} + T^2 \|u_{1}\|_{L^2(G)_{l,n}}^{2}\Big)^{p}\\
&\leq c(\|u_{0}\|_{L^2(G)_{l,n}}^{2} + T^2 \|u_{1}\|_{L^2(G)_{l,n}}^{2}).
\end{split}
\end{align*}
It can be obtained by requiring the following
$$
T \leq T^{\ast}:=\min\left[\left(\frac{c-1}{\|\Psi\|_{L^{2}(0, T)}^{2}c^{p}\|u_0\|_{L^2(G)_{l,n}}^{2p-2}}\right)^{1/3}, \, \left(\frac{c-1}{\|\Psi\|_{L^{2}(0, T)}^{2}c^{p}\|u_1\|_{L^2(G)_{l,n}}^{2p-2}}\right)^{\frac{1}{3}}\right].
$$
Thus, by applying the fixed point theorem, there exists a unique local solution $u\in L^{\infty}(0, T^{\ast};$ $ L^2(G)_{l,n})$ of the Cauchy problem \eqref{E-WNLE}.

To establish part (ii), we replicate the reasoning from the proof of part (i) to obtain \eqref{EQ: WE-time-space-norm}. Now, considering the assumptions on $u_1  (\equiv 0)$ and $\Psi$, the inequality \eqref{EQ: WE-time-space-norm} yields
\begin{equation}
\label{EQ: WE-time-space-norm-2}
\|u\|_{L^{\infty}(0, T; L^2(G)_{l,n})}^{2}\leq C \Big(\|u_{0}\|_{L^2(G)_{l,n}}^{2} + T^{3-2\gamma}  \|u\|^{2p}_{L^{\infty}(0, T; L^2(G)_{l,n})}\Big). 
\end{equation}

For a fixed constant $c \geq 1$, let us introduce the following set
$$
R_c:=\Big\{u\in L^{\infty}(0, T; L^2(G)_{l,n}): \|u\|_{L^{\infty}(0, T; L^2(G)_{l,n})}^{2} \leq c T^{\gamma_{0}}\|u_{0}\|_{L^2(G)_{l,n}}^{2}\Big\},
$$
with $\gamma_{0}>0$ is to be defined later. Now, note that
\begin{align*}
\|u_{0}\|_{L^2(G)_{l,n}}^{2} + T^{3-2\gamma}  \|u\|^{2p}_{L^{\infty}(0, T; L^2(G)_{l,n})} 
\leq \|u_{0}\|_{L^2(G)_{l,n}}^{2} + T^{3-2\gamma+\gamma_{0}p} c^{p} \|u_{0}\|_{L^2(G)_{l,n}}^{2p}.    
\end{align*}

To guarantee $u\in R_c$, by invoking \eqref{EQ: WE-time-space-norm-2} we require that
\begin{align*}
\|u_{0}\|_{L^2(G)_{l,n}}^{2} + T^{3-2\gamma+\gamma_{0}p} c^{p} \|u_{0}\|_{L^2(G)_{l,n}}^{2p} \leq c T^{\gamma_{0}} \|u_{0}\|_{L^2(G)_{l,n}}^{2}.   
\end{align*}
Now by choosing $0<\gamma_0<\frac{2\gamma-3}{p}$ such that
$
\tilde{\gamma}:=3-2\gamma+\gamma_{0}p<0,
$ we obtain
$$
c^{p} \|u_{0}\|_{L^2(G)_{l,n}}^{2p-2} \leq c T^{-\tilde{\gamma}+\gamma_{0}}.
$$
From the last estimate, we conclude that for any $T>0$ there exists sufficiently small $\|u_{0}\|_{L^2(G)_{l,n}}$ such that IVP \eqref{E-WNLE} has a solution. It proves part (ii) of Theorem \ref{Th: E-WNLE} and thus concludes the theorem.
\end{proof}

\section*{Acknowledgments}
 The authors are supported by the FWO Odysseus 1 grant G.0H94.18N: Analysis and Partial Differential Equations, the Methusalem program of the Ghent University Special Research Fund (BOF) (Grant number 01M01021). VK and MR are supported by FWO Senior Research Grant G011522N. TR is also supported by BOF postdoctoral fellowship at Ghent University BOF24/PDO/025.   MR is also supported by EPSRC grant EP/V005529/7.
\bibliographystyle{alphaurl}
\bibliography{ReferencesIII.bib}

\begin{thebibliography}{CSKRT20}

\bibitem[ADY96]{ADY_96}
J.-Ph. Anker, E.~Damek, and C.~Yacoub.
\newblock Spherical analysis on harmonic {$AN$} groups.
\newblock {\em Ann. Scuola Norm. Sup. Pisa Cl. Sci. (4)}, 23(4):643--679, 1996.
\newblock URL: \url{http://www.numdam.org/item?id=ASNSP_1996_4_23_4_643_0}.

\bibitem[AL86]{Anker-Lohoue}
J.-Ph. Anker and N.~Lohoué.
\newblock Multiplicateurs sur certains espaces sym\'{e}triques.
\newblock {\em Amer. J. Math.}, 108(6):1303--1353, 1986.
\newblock \href {https://doi.org/10.2307/2374528} {\path{doi:10.2307/2374528}}.

\bibitem[AMR18]{AMM18}
R.~Akylzhanov, S.~Majid, and M.~Ruzhansky.
\newblock Smooth dense subalgebras and {F}ourier multipliers on compact quantum
  groups.
\newblock {\em Comm. Math. Phys.}, 362(3):761--799, 2018.
\newblock \href {https://doi.org/10.1007/s00220-018-3219-4}
  {\path{doi:10.1007/s00220-018-3219-4}}.

\bibitem[Ank90]{Anker}
J.-Ph. Anker.
\newblock {${\bf L}_p$} {F}ourier multipliers on {R}iemannian symmetric spaces
  of the noncompact type.
\newblock {\em Ann. of Math. (2)}, 132(3):597--628, 1990.
\newblock \href {https://doi.org/10.2307/1971430} {\path{doi:10.2307/1971430}}.

\bibitem[AR20]{RR19}
R.~Akylzhanov and M.~Ruzhansky.
\newblock {$L^p$}-{$L^q$} multipliers on locally compact groups.
\newblock {\em J. Funct. Anal.}, 278(3):108324, 49, 2020.
\newblock \href {https://doi.org/10.1016/j.jfa.2019.108324}
  {\path{doi:10.1016/j.jfa.2019.108324}}.

\bibitem[ARN19]{RRN19}
R.~Akylzhanov, M.~Ruzhansky, and E.~Nursultanov.
\newblock Hardy-{L}ittlewood, {H}ausdorff-{Y}oung-{P}aley inequalities, and
  {$L^p$}-{$L^q$} {F}ourier multipliers on compact homogeneous manifolds.
\newblock {\em J. Math. Anal. Appl.}, 479(2):1519--1548, 2019.
\newblock \href {https://doi.org/10.1016/j.jmaa.2019.07.010}
  {\path{doi:10.1016/j.jmaa.2019.07.010}}.

\bibitem[Art89]{MR1011896}
J.~G. Arthur.
\newblock Harmonic analysis of tempered distributions on semisimple {L}ie
  groups of real rank one.
\newblock In {\em Representation theory and harmonic analysis on semisimple
  {L}ie groups}, volume~31 of {\em Math. Surveys Monogr.}, pages 13--100. Amer.
  Math. Soc., Providence, RI, 1989.
\newblock Dissertation, Yale University, New Haven, CT, 1970.
\newblock \href {https://doi.org/10.1090/surv/031/02}
  {\path{doi:10.1090/surv/031/02}}.

\bibitem[Bar88]{Barker}
W.~H. Barker.
\newblock {$L^p$} harmonic analysis on {${\rm SL}(2,{ \R})$}.
\newblock {\em Mem. Amer. Math. Soc.}, 76(393):iv+110, 1988.
\newblock \href {https://doi.org/10.1090/memo/0393}
  {\path{doi:10.1090/memo/0393}}.

\bibitem[BL76]{MR0482275}
J.~Bergh and J.~L\"{o}fstr\"{o}m.
\newblock {\em Interpolation spaces. {A}n introduction}, volume No. 223 of {\em
  Grundlehren der Mathematischen Wissenschaften}.
\newblock Springer-Verlag, Berlin-New York, 1976.

\bibitem[BT15]{BT}
S.~Bagchi and S.~Thangavelu.
\newblock On {H}ermite pseudo-multipliers.
\newblock {\em J. Funct. Anal.}, 268(1):140--170, 2015.
\newblock \href {https://doi.org/10.1016/j.jfa.2014.09.020}
  {\path{doi:10.1016/j.jfa.2014.09.020}}.

\bibitem[CGM93]{CGM93}
M.~Cowling, S.~Giulini, and S.~Meda.
\newblock {$L^p$}-{$L^q$} estimates for functions of the {L}aplace-{B}eltrami
  operator on noncompact symmetric spaces. {I}.
\newblock {\em Duke Math. J.}, 72(1):109--150, 1993.
\newblock \href {https://doi.org/10.1215/S0012-7094-93-07206-7}
  {\path{doi:10.1215/S0012-7094-93-07206-7}}.

\bibitem[CK22]{CK22i}
M.~Chatzakou and V.~Kumar.
\newblock {$L^p$}-{$L^q$} boundedness of spectral multipliers of the anharmonic
  oscillator.
\newblock {\em C. R. Math. Acad. Sci. Paris}, 360:343--347, 2022.
\newblock \href {https://doi.org/10.5802/crmath.290}
  {\path{doi:10.5802/crmath.290}}.

\bibitem[CK23]{CK23}
M.~Chatzakou and V.~Kumar.
\newblock {$L^p$}-{$L^q$} boundedness of {F}ourier multipliers associated with
  the anharmonic oscillator.
\newblock {\em J. Fourier Anal. Appl.}, 29(6):Paper No. 73, 26, 2023.
\newblock \href {https://doi.org/10.1007/s00041-023-10047-x}
  {\path{doi:10.1007/s00041-023-10047-x}}.

\bibitem[CKR23]{cardona2023p}
D.~Cardona, V.~Kumar, and M.~Ruzhansky.
\newblock {$L^p$}-{$L^q$} boundedness of pseudo-differential operators on
  graded lie groups.
\newblock {\em arXiv preprint arXiv:2307.16094}, 2023.

\bibitem[CMW19]{Meda_Stefano_2019}
D.~Celotto, S.~Meda, and B~Wr\'{o}bel.
\newblock {$L^p$} spherical multipliers on homogeneous trees.
\newblock {\em Studia Math.}, 247(2):175--190, 2019.
\newblock \href {https://doi.org/10.4064/sm170626-23-2}
  {\path{doi:10.4064/sm170626-23-2}}.

\bibitem[CS74]{Clerc_Stein}
J.~L. Clerc and E.~M. Stein.
\newblock {$L^{p}$}-multipliers for noncompact symmetric spaces.
\newblock {\em Proc. Nat. Acad. Sci. U.S.A.}, 71:3911--3912, 1974.
\newblock \href {https://doi.org/10.1073/pnas.71.10.3911}
  {\path{doi:10.1073/pnas.71.10.3911}}.

\bibitem[CSKRT20]{DKRN}
D.~Cardona~S\'{a}nchez, V.~Kumar, M.~Ruzhansky, and N.~Tokmagambetov.
\newblock ${L^p-L^q}$ boundedness of pseudo-differential operators on smooth
  manifolds and its applications to nonlinear equations.
\newblock {\em Arxiv preprint}, 2020.
\newblock \href {https://doi.org/10.48550/arXiv.2005.04936}
  {\path{doi:10.48550/arXiv.2005.04936}}.

\bibitem[GMM97]{GMM}
S.~Giulini, G.~Mauceri, and S.~Meda.
\newblock {$L^p$} multipliers on noncompact symmetric spaces.
\newblock {\em J. Reine Angew. Math.}, 482:151--175, 1997.
\newblock \href {https://doi.org/10.1515/crll.1997.482.151}
  {\path{doi:10.1515/crll.1997.482.151}}.

\bibitem[H\"60]{Hormander_Acta_60}
L.~H\"{o}rmander.
\newblock Estimates for translation invariant operators in {$L^{p}$} spaces.
\newblock {\em Acta Math.}, 104:93--140, 1960.
\newblock \href {https://doi.org/10.1007/BF02547187}
  {\path{doi:10.1007/BF02547187}}.

\bibitem[HC52]{MR0047055}
Harish-Chandra.
\newblock Plancherel formula for the {$2\times 2$} real unimodular group.
\newblock {\em Proc. Nat. Acad. Sci. U.S.A.}, 38:337--342, 1952.
\newblock \href {https://doi.org/10.1073/pnas.38.4.337}
  {\path{doi:10.1073/pnas.38.4.337}}.

\bibitem[Ion02]{Ionescu_2002}
A.~D. Ionescu.
\newblock Singular integrals on symmetric spaces of real rank one.
\newblock {\em Duke Math. J.}, 114(1):101--122, 2002.
\newblock \href {https://doi.org/10.1215/S0012-7094-02-11415-X}
  {\path{doi:10.1215/S0012-7094-02-11415-X}}.

\bibitem[Ion03]{Ionescu_2003}
A.~D. Ionescu.
\newblock Singular integrals on symmetric spaces. {II}.
\newblock {\em Trans. Amer. Math. Soc.}, 355(8):3359--3378, 2003.
\newblock \href {https://doi.org/10.1090/S0002-9947-03-03312-9}
  {\path{doi:10.1090/S0002-9947-03-03312-9}}.

\bibitem[KR22]{KR22}
V.~Kumar and M.~Ruzhansky.
\newblock Hardy-{L}ittlewood inequality and {$L^p$}-{$L^q$} {F}ourier
  multipliers on compact hypergroups.
\newblock {\em J. Lie Theory}, 32(2):475--498, 2022.

\bibitem[KR23a]{KR23}
V.~Kumar and M.~Ruzhansky.
\newblock {$L^p$}-{$L^q$}-boundedness of {$(k,a)$}-{F}ourier multipliers with
  applications to nonlinear equations.
\newblock {\em Int. Math. Res. Not. IMRN}, (2):1073--1093, 2023.
\newblock \href {https://doi.org/10.1093/imrn/rnab256}
  {\path{doi:10.1093/imrn/rnab256}}.

\bibitem[KR23b]{KR23II}
V.~Kumar and M.~Ruzhansky.
\newblock {$L^p$}-{$L^q$} multipliers on commutative hypergroups.
\newblock {\em J. Aust. Math. Soc.}, 115(3):375--395, 2023.
\newblock \href {https://doi.org/10.1017/s1446788723000125}
  {\path{doi:10.1017/s1446788723000125}}.

\bibitem[Mih56]{Mih56}
S.~G. Mihlin.
\newblock On the multipliers of {F}ourier integrals.
\newblock {\em Dokl. Akad. Nauk SSSR (N.S.)}, 109:701--703, 1956.

\bibitem[MV10]{Meda_Vallarino_10}
S.~Meda and M.~Vallarino.
\newblock Weak type estimates for spherical multipliers on noncompact symmetric
  spaces.
\newblock {\em Trans. Amer. Math. Soc.}, 362(6):2993--3026, 2010.
\newblock \href {https://doi.org/10.1090/S0002-9947-10-05082-8}
  {\path{doi:10.1090/S0002-9947-10-05082-8}}.

\bibitem[MW21]{Meda_Wrobel_2021}
S.~Meda and B.~Wr\'{o}bel.
\newblock Marcinkiewicz-type multipliers on products of noncompact symmetric
  spaces.
\newblock {\em Ann. Sc. Norm. Super. Pisa Cl. Sci. (5)}, 22(4):1747--1804,
  2021.

\bibitem[RR23a]{rana_23}
T.~Rana and M.~Ruzhansky.
\newblock Pseudo-differential operators on radial sections of line bundles over
  the poincar\'e upper half plane, 2023.
\newblock URL: \url{https://doi.org/10.48550/arXiv.2310.11371}, \href
  {https://arxiv.org/abs/2310.11371} {\path{arXiv:2310.11371}}.

\bibitem[RR23b]{MR4575780}
D.~Rottensteiner and M.~Ruzhansky.
\newblock An update on the {$L^p$}-{$L^q$} norms of spectral multipliers on
  unimodular {L}ie groups.
\newblock {\em Arch. Math. (Basel)}, 120(5):507--520, 2023.
\newblock \href {https://doi.org/10.1007/s00013-023-01838-1}
  {\path{doi:10.1007/s00013-023-01838-1}}.

\bibitem[RST24]{ruzhansky2024h}
M.~Ruzhansky, S.~Shaimardan, and K.~Tulenov.
\newblock H{\"o}rmander type fourier multiplier theorem and nikolskii
  inequality on quantum tori, and applications.
\newblock {\em arXiv preprint arXiv:2402.17353}, 2024.

\bibitem[RW15]{MR3369343}
M.~Ruzhansky and J.~Wirth.
\newblock {$L^p$} {F}ourier multipliers on compact {L}ie groups.
\newblock {\em Math. Z.}, 280(3-4):621--642, 2015.
\newblock \href {https://doi.org/10.1007/s00209-015-1440-9}
  {\path{doi:10.1007/s00209-015-1440-9}}.

\bibitem[RW20]{Ricci}
F.~Ricci and B.~Wr\'{o}bel.
\newblock Spectral multipliers for functions of fixed {$K$}-type on
  {$L^p(SL(2,\Bbb R))$}.
\newblock {\em Math. Nachr.}, 293(3):554--584, 2020.
\newblock \href {https://doi.org/10.1002/mana.201800412}
  {\path{doi:10.1002/mana.201800412}}.

\bibitem[ST78]{Stanton_and_Tomas}
R.~J. Stanton and P.~A. Tomas.
\newblock Expansions for spherical functions on noncompact symmetric spaces.
\newblock {\em Acta Math.}, 140(3-4):251--276, 1978.
\newblock \href {https://doi.org/10.1007/BF02392309}
  {\path{doi:10.1007/BF02392309}}.

\bibitem[Tay89]{Taylor}
M.~E. Taylor.
\newblock {$L^p$}-estimates on functions of the {L}aplace operator.
\newblock {\em Duke Math. J.}, 58(3):773--793, 1989.
\newblock \href {https://doi.org/10.1215/S0012-7094-89-05836-5}
  {\path{doi:10.1215/S0012-7094-89-05836-5}}.

\bibitem[Wr{\'{o}}23]{wrobel}
B.~Wr{\'{o}}bel.
\newblock Improved multiplier theorem on rank one noncompact symmetric spaces.
\newblock {\em arXiv:2305.06039}, 2023.
\newblock https://doi.org/10.48550/arXiv.2305.06039.
\newblock URL: \url{https://doi.org/10.48550/arXiv.2305.06039}.

\end{thebibliography}

\end{document}